\documentclass[a4paper,11pt,reqno]{amsart}

\usepackage[margin=3cm]{geometry}

\usepackage{amssymb,amsfonts}
\usepackage{mathrsfs}

\usepackage{graphicx}
\usepackage{subcaption}
\usepackage{multirow}
\usepackage{booktabs}

\usepackage{xcolor}
\usepackage{textcomp}
\usepackage{adjustbox}  
\usepackage{placeins}   

\usepackage{natbib}

\newtheorem{theorem}{Theorem}
\newtheorem{proposition}[theorem]{Proposition}
\newtheorem{lemma}[theorem]{Lemma}
\newtheorem{corollary}[theorem]{Corollary}

\newtheorem{example}{Example}
\newtheorem{remark}{Remark}
\newtheorem{definition}{Definition}
\newtheorem{assumption}{Assumption}

\usepackage{enumitem}
\usepackage{multicol}

\DeclareMathOperator*{\minimize}{minimize}
\DeclareMathOperator*{\maximize}{maximize}
\DeclareMathOperator{\st}{subject\; to}
\DeclareMathOperator{\sta}{s.t.}
\def\ip#1#2{\left\langle #1, #2 \right\rangle}
\DeclareMathOperator*{\diag}{diag}
\DeclareMathOperator*{\supp}{supp}
\DeclareMathOperator{\CVaR}{CVaR}
\DeclareMathOperator{\EVaR}{EVaR}

\newcommand{\R}{\mathbb{R}}

\newcommand{\A}{\mathcal{A}}
\newcommand{\C}{\mathcal{C}}
\newcommand{\K}{\mathcal{K}}
\newcommand{\LL}{\mathcal{L}}
\newcommand{\PP}{\mathcal{P}}
\newcommand{\M}{\mathcal{M}}
\newcommand{\Ss}{\mathcal{S}}
\newcommand{\RR}{\mathcal{R}}

\let\origmaketitle\maketitle
\def\maketitle{
	\begingroup
	\def\uppercasenonmath##1{} 
	\let\MakeUppercase\relax 
	\origmaketitle
	\endgroup
}

\usepackage{hyperref}

\hypersetup{
  colorlinks = true,
  linkcolor  = blue,
  citecolor  = blue,
  urlcolor   = blue
}

\subjclass[2020]{91A12, 90C25, 93A16, 90B30, 91G10, 91A46}
\keywords{cooperative games, conic programming, core allocations, multi-agent optimal control, production, portfolio selection, combinatorial optimization games}

\begin{document}

\title[Cooperation in Conic Programming]{\LARGE Cooperation in Conic Programming with Applications to Control, Production, \\ and Portfolio}

\author[M. Mart\'inez-Ant\'on \MakeLowercase{and} J. Puerto]{
{\large Miguel Mart\'inez-Ant\'on$^{\ddagger}$ and Justo Puerto$^{\ddagger}$}\bigskip\\
$^\ddagger$Institute of Mathematics (IMUS), Universidad de Sevilla\bigskip\\
\texttt{mmartinez31@us.es, puerto@us.es}
}

\maketitle

\begin{abstract}
    We introduce the class of cooperative conic games, a new family of transferable utility games whose characteristic function can be computed by solving a conic optimization problem. This framework unifies a broad range of optimization-based cooperative games within a common mathematical formulation, encompassing linear, second-order cone, semidefinite, and other convex nonlinear optimization problems admitting conic representations. Exploiting the structural properties of conic programs and conic duality theory, we derive general conditions ensuring the existence of core payoff allocations (or cost shares) and show how such allocations can be computed directly from optimal dual solutions. In particular, the proposed methodology replaces combinatorial verification procedures for core nonemptiness with tractable optimization-based criteria. The expressive power of the framework is illustrated through several representative applications. We show that it models cooperation in multi-agent optimal control, generalizes linear production games to nonlinear production settings such as Cobb--Douglas models, introduces cooperative formulations for portfolio selection under Markowitz, conditional value-at-risk, and entropic risk measures, and unifies a broad class of combinatorial optimization games through exact conic reformulations. These examples involve the most relevant cones in modern convex optimization, highlighting both the versatility of the proposed framework and its potential to strengthen the interaction between mathematical programming and cooperative game theory.
\end{abstract}

\section{Introduction}\label{sec:intro}

The growing interest in analyzing cooperation within different mathematical optimization models has led to an increasing number of studies addressing various aspects of this type of interaction. The interplay between mathematical programming and game theory has become a fruitful area of research. Classical models in mathematical programming deal with problems in which a decision maker seeks to design a program that optimizes the operation of a complex system. In contrast, game theory is concerned with decision problems involving several decision makers. Clearly, modeling and solving complex systems in which multiple decision makers interact require both the models and methodologies of mathematical programming and game theory. In this context, many classes of cooperative \emph{transferable utility} (TU) games arise, for instance, in linear production~\citep{Owen75, granot1986generalized, Timmer00, FFGP05}, network design~\citep{granot1981minimum, granot1984core}, facility location~\citep{GoSku04}, routing~\citep{Tamir88}, regression~\citep{Pinter2011}, queueing systems~\citep{Timmer13}, and more general combinatorial optimization problems~\citep{CaLe10, Deng99}, among many others.

The present analytical study of these situations is based on associating cooperative TU games with a general class of convex optimization problems. The objective is to derive payoff allocations or cost shares that are easily interpretable and, therefore, acceptable to the participating agents. There are several reasons for addressing the sharing problem, as it is a common concern in many fields, including public service, engineering, system analysis and design, economics, operations research, and management science.

Although, in general, testing the existence of a \emph{stable} (core) payoff allocation is NP-hard, finding good allocations is essential for enforcing this cooperation. Therefore, it is important to explore from different methodological standpoints the existence of stable allocations, in the sense that no coalition can raise objections regarding the share they have received.

This paper elaborates on this direction. Our goal is to introduce a novel class of cooperative games whose characteristic function is given by the optimal value of a conic program. Conic programming is a broad class of convex optimization problems that unifies linear, second-order cone, and semidefinite programming. It provides a unifying framework for representing nonlinear convex sets through structured constraints that define the solution set as the intersection of affine subspaces and cones. Many nonlinear optimization problems arising in applications can be expressed, or approximated arbitrarily well, using combinations of a small family of canonical cones, often referred to as the \emph{big five}, namely the nonnegative orthant, the second-order cone, the positive semidefinite cone, the power cone, and the exponential cone. This allows one to transform nonlinear convex optimization problems into structured conic programs solvable via interior-point methods implemented in modern conic optimization solvers. The interested reader is referred to \cite{BenTalNemirovski2001,BoydVandenberghe2004, anjos2011handbook, blekherman2012semidefinite, LuYe16}; and the references therein for further details.

There are two main reasons motivating this new class of games. On the one hand, it provides a general framework for previously studied optimization and operations research games, as in the aforementioned references. On the other hand, it opens the door to new cooperative settings arising from nonlinear optimization models that, to the best of our knowledge, have not been considered before, although they are of significant interest in areas such as optimal control, production models, and portfolio selection.

\subsection*{Contributions and Organization}

The main contribution of this paper is twofold: 
\begin{enumerate}[label=(\roman*)]
    \item the \textbf{conceptual} contribution is the introduction of \emph{cooperative conic games} as a new class of TU games based on conic programming; and

    \item the \textbf{methodological} contribution is the derivation of existence and computation results for core allocations by exploiting the structural properties and duality theory of conic programming.
\end{enumerate}
This thereby links, once more, another branch of mathematical programming with game theory.

Besides its main contribution, this paper presents connections and applications to real-world situations.

\begin{enumerate}
\item It shows the expressive power of the framework in different cooperative settings in \textbf{multi-agent optimal control}.

\item It generalizes linear production games and allows for more complex \textbf{nonlinear production} frameworks, such as Cobb--Douglas models.

\item It introduces cooperation in \textbf{portfolio selection} and applies it to linear (e.g., conditional value-at-risk), quadratic (e.g., Markowitz), and entropic risk measures.

\item It provides conditions for stability in very general \textbf{combinatorial optimization games}.
\end{enumerate}

We would like to emphasize that, throughout the paper, the reader will find instances involving the most relevant cones in the mathematical optimization literature, namely the nonnegative orthant, the second-order cone, the positive semidefinite cone, the power cone, and the exponential cone, as well as the sum of squares cone, the moment cone, the completely positive cone, and the copositive cone. This highlights the versatility of the proposed framework and is key to promoting its use among both theorists and practitioners.

The remainder of the paper is organized as follows. Section \ref{sec:prelims} introduces the general class of cooperative conic games and proves properties of the game by means of conditions on the conic formulation. Section \ref{sec:control} presents how this framework models cooperation in multi-agent optimal control. Section \ref{s:clpg} expands the notion of linear production games and proves stability in this general context. It then shows how it applies to nonlinear production settings, such as Cobb--Douglas models. Section \ref{s:psg} is devoted to analyzing the issue of cooperation in portfolio selection, studying three different cases for the risk measure: linear (conditional value-at-risk), quadratic (Markowitz), and entropic (entropic value-at-risk). These are three representative examples that can be easily extended to much of the existing literature. Section \ref{s:mbncq} generalizes combinatorial optimization games and proves stability under some conditions in this very general setting. The paper ends in Section \ref{s:cr} with some concluding remarks on the results of the paper and some ideas for further research on the topic.

\section{Cooperative Conic Games}\label{sec:prelims}

A \emph{transferable utility cooperative game} (TU game, for short) is a pair $(N,\nu)$ where $N$ is a finite set of \emph{players} or \emph{agents}, and $\nu:2^N \to \mathbb{R}$ is the \emph{characteristic function}, which assigns to each \emph{coalition} $S \subseteq N$ the profit (or cost) $\nu(S) \in \mathbb{R}$ generated by the cooperation of the players in $S$, without the participation of the remaining players in $N \setminus S$, and which can be arbitrarily allocated among its members; that is, utility is transferable. Players are allowed to cooperate, but their objective is to maximize their own individual benefit, and the set of all players $N$ is referred to as the \emph{grand coalition}. Notice that a similar definition can be adapted to cost TU games.

One of the main challenges in TU games consists of allocating the profit that the grand coalition can generate among the different players. Formally, an \emph{allocation} is a vector $z \in \mathbb{R}^N$, where $z_i$ denotes the share assigned to player $i$, for every $i \in N$. Two minimal requirements for an admissible solution are efficiency and individual rationality. An allocation $z \in \mathbb{R}^N$ is said to be \emph{efficient} if it properly distributes the profit of the grand coalition, i.e., $\sum_{i \in N} z_i = \nu(N)$. Moreover, an allocation is said to be \emph{individually rational} if each player receives at least what it can obtain on its own, that is, $z_i \geq \nu(\{i\})$ for all $i \in N$. Any allocation satisfying these two requirements is called an \emph{imputation}.

In this context, a \emph{solution concept} is a mapping that assigns to each TU game a set of imputations. Traditionally, two families of solutions can be distinguished.

\subsection*{Point-valued solutions} Firstly, we have the \emph{point-valued solution} concept, which arises when there is a single choice of an imputation within the solution set, so that each game is assigned a single imputation as its solution. The most widely used point-valued solution is the \emph{Shapley value}~\citep{shapley1953value}. Given a TU game $(N,\nu)$, its Shapley value is defined componentwise by
\begin{equation}\tag{Shapley}
   \frac{1}{|\mathscr S(N)|}\sum_{\sigma\in \mathscr S(N)} [\nu(P^\sigma_i\cup\{i\})-\nu(P^\sigma_i)], \quad \forall i\in N.
\end{equation}
Here, $\mathscr S(N)$ stands for the set of orders of $N$ and $P^\sigma_i:=\{j \in N: \sigma(j)< \sigma(i)\}$ denotes the subset of predecessors of player $i$ by the order $\sigma$. The number $\nu(P^\sigma_i\cup\{i\})-\nu(P^\sigma_i)$ accounts the marginal contribution of the player $i$ to the coalition of its predecessors by $\sigma$. Therefore, the Shapley value distributes to each player the average of its marginal contribution to the coalitions in which it does not participate.

\subsection*{Set-valued solutions and balancedness} These solution concepts make each TU game correspond to a solution set within all imputations. The most well-known representative of this type is the \emph{core}~\citep{gillies1953some}. Given a TU game $(N,\nu)$, its core is defined as
\begin{equation}\tag{Core}
    \left\{z\in \R^N : \quad \sum_{i\in S} z_i\geq \nu(S) \; \forall S\subset N, \quad \sum_{i\in N}z_i=\nu(N)\right\}.
\end{equation}
An allocation $z\in\R^N$ such that $\sum_{i\in S} z_i\geq \nu(S)$, for all coalition $S$, is said to be \emph{coalitionally rational}. Notice that this condition expands the individual rationality to all coalitions (not only singletons). Hence, the core of a TU game can be seen as the set of efficient and coalitionally rational allocations. Such core allocations are said to be \emph{stable} since no coalition can complain about the sharing of the grand coalition. By the Bondareva--Shapley theorem~\citep{bondareva1963some, shapley1967core}, a game is \emph{balanced} when its core is nonempty, and \emph{totally balanced} if the core of any of its subgames is nonempty.

\subsection*{Conic programming}  Let $V$ be a real vector space. Let $\mathcal{K} \subset V$ denote a pointed, closed, convex cone with nonempty interior; in this case, $\mathcal{K}$ is said to be a \emph{proper cone}. Consider the space of real linear mappings $V^* := \{y : V \to \mathbb{R} \mid \text{linear}\}$ as the dual vector space of $V$, together with the \emph{pairing} $\langle y, x \rangle_V := y(x)$. We define $\mathcal{K}^* := \{y \in V^* : \langle y, x \rangle_V \geq 0,\ \forall x \in \mathcal{K}\}$ as the dual cone, and denote by $\mathcal{K}^\circ$ the interior of $\mathcal{K}$. Recall that $x \in \mathcal{K}^\circ$ if and only if, for any $y \in \mathcal{K}^*$, the condition $\langle y, x \rangle = 0$ implies $y = 0$.

Now consider two real vector spaces $V$ and $U$, and a linear mapping $\mathcal{A} : V \to U$. Recall that the \emph{adjoint mapping} of $\mathcal{A}$ is defined as the unique linear mapping $\mathcal{A}^* : U^* \to V^*$ that satisfies

\begin{equation*}
    \ip{\A^*y}{x}_V=\ip{y}{\A x}_U, \; \forall x\in V, y\in U^*.
\end{equation*}

\begin{definition}[Conic programs]\label{def:cp}
    Let $V$ and $U$ be real vector spaces, $\A:V\to 
    U$ be a linear mapping, $\K\subset V$ be a proper cone, $b\in U$, and $c\in V^*$. The standard form \emph{conic programming {\rm(CP-P)} primal problem} and its \emph{dual problem {\rm (CP-D)}} are given by:
{    
\begin{multicols}{2} 
\noindent 
\begin{minipage}{0.48\textwidth}
\begin{align*}\label{eq:CP-P}\tag{CP-P}
    \minimize \quad  &\; \ip{c}{x}_V\\
    \st \quad &\; \A x= b,\\
                        &\; x\in \K;
\end{align*}
\end{minipage}

\columnbreak 

\noindent
\begin{minipage}{0.48\textwidth}
\begin{align*}\label{eq:CP-D}\tag{CP-D}
    \maximize \quad &\; \ip{y}{b}_U\\
    \st \quad &\; c-\A^* y\in \K^*.
\end{align*}
\end{minipage}
\end{multicols}
}
Here, $x$ is said to be the \emph{decision vector} and $y$ is said to be the \emph{dual price vector}.
\end{definition}

The starting point should be the geometric interpretation.  Note that the feasible sets are defined by the intersection of an affine subspace and a proper cone.
Classic examples of the above model are the standard linear programming where $\K=\mathbb{R}^n_+$ is the nonnegative orthant; second-order cone programming where $\K=\LL^n:=\{(x, t)\in \mathbb{R}^n\times \R: t\geq \|x\|_2 \}$ is the Lorentz cone; semidefinite programming where $\K=\Ss^n_+$ is the cone of $n\times n$ positive semidefinite matrices; or $\K=\K_1\times \K_2 \times \cdots \times \K_p$ being $\K_i$, $i=1,\ldots, p$  proper cones. Conic programming is a relatively novel area of mathematical programming that can be solved efficiently, and it is already implemented in modern solvers such as MOSEK.

Table~\ref{tab:cones} gathers the most relevant cones in convex optimization. The reader will find that all of them are used throughout the paper. Here, $\Ss^n$ stands for the space of $n\times n$ real symmetric matrices, and $\R_d[x]$ stands for the space of real multivariate polynomials of degree at most $d$.

\begin{table}
\footnotesize
\renewcommand{\arraystretch}{1.5}
\centering
\caption{Most relevant cones in convex optimization. All of them appear in this paper. \label{tab:cones}}
\begin{tabular}{p{3.8cm}p{1cm}p{1cm}p{6.4cm}}
\toprule
\textbf{Name} & $\K$ & \textbf{Space} & \textbf{Description}\\\midrule
Nonnegative orthant & $\R^n_+$ & $\R^n$ & $x\geq 0$ (componentwise order) \\
Second-order cone & $\LL^n$ & $\R^{n+1}$ & $\|x\|_2 \leq t$ (Euclidean norm epigraph)\\
Semidefinite cone & $\Ss^n_+$ & $\Ss^n$ & $x^\top Xx\geq 0, \forall x\in \R^n$ ($X\succeq 0$ L\"owner order) \\
Power cone & $\PP^n_\lambda$ & $\R^{n+1}$ & $|t|\leq \prod_{i=1}^n x_i^{\lambda_i}$ (geometric mean hypograph)\\
Exponential cone & $\K_{\exp}$ & $\R^3$ & cl $z\geq y\exp(x/y)$ (exp. perspective epigraph) \\
Copositive cone & $\C_n$ & $\Ss^n$ & $x^\top Xx\geq 0, \forall x\in \R^n_+$\\
Completely positive cone & $\C^*_n$ & $\Ss^n$ & $X=\sum_{k=1}^Kz_kz_k^\top$ for some $z_k\in\R^n_+$\\
Sum of squares cone & $\Sigma_{d}[x]$ & $\R_{2d}[x]$ & $f=\sum_{k=1}^K q_k^2$ for some $q_k\in\R_d[x]$\\
Moment cone & $\M_d^+$ & $\R_{2d}[x]^*$ & $M_d(y)\succeq 0$ where $M_{d}(y)_{\alpha,\beta}=y(x^{\alpha + \beta})$\\\bottomrule
\end{tabular}
\end{table}

From an algebraic viewpoint, the weak duality property holds for conic programs.\\

\subsection*{Weak duality} For any couple of feasible solutions $x,y$ of \eqref{eq:CP-P} and \eqref{eq:CP-D}, respectively, it always holds that
\begin{equation*}
    \begin{split}
    \ip{c}{x}_V-\ip{y}{b}_U&=\ip{c}{x}_V-\ip{y}{\A x}_U\\
    &=\ip{c}{x}_V-\ip{\A^*y}{x}_V\\
    &=\ip{c-\A^* y}{x}_V\geq 0.
\end{split}
\end{equation*}

Despite the formal similarities, there are several differences between linear and general conic programming. Beyond weak duality, there is no strong duality in the general case, i.e., there may be a nonzero duality gap between optimal values of the primal-dual pair. Moreover, optimal solutions may not be attained, even if there is zero duality gap.\\

\subsection*{Strong duality}
\begin{enumerate}
    \item Let either \eqref{eq:CP-P} or \eqref{eq:CP-D} be infeasible, and furthermore the other be feasible and has a nonempty interior (\emph{Slater condition}). Then the other is unbounded.

    \item Let \eqref{eq:CP-P} and \eqref{eq:CP-D} be both feasible, and furthermore one of them has a nonempty interior. Then there is no duality gap at optimality between them, i.e.,
    \begin{equation*}
        \inf\{\ip{c}{x}_V \mid \A x=b, x\in \K\}=\sup\{\ip{y}{b}_U \mid c-\A^* y\in \K^*\}.
    \end{equation*}

    \item If there is a pair of feasible solutions $x\in \K^\circ$ and $y$ (with $c-\A^*y\in (\K^*)^\circ$) of \eqref{eq:CP-P} and \eqref{eq:CP-D}, respectively. Then, they achieve the same optimal value (zero duality gap), and there exist optimal feasible solutions $x^\star$ and $y^\star$ such that $\ip{c}{x^\star}_V=\ip{y^\star}{b}_U$.
\end{enumerate}

\begin{definition}[Cooperative conic game]\label{def:ccg}
 A TU game $(N,\nu)$ is said to be a TU cooperative conic game (TU conic game, for short) if its characteristic function can be computed by solving a conic program. It means that for each coalition $S\subseteq N$ there exists parameters $(c_S, b_S, \A_S, \K_S)$ such that 
 \begin{align*}\label{eq:ccg}\tag{TU-CP}
    \nu(S) = \max\; (\min)  &\; \ip{c_S}{x}_V\\
    \st \; (\sta) &\; \A_S x= b_S,\\
                        &\; x\in \K_S.
\end{align*}
\end{definition}

Denoting the affine space $L(S):=\{x \in V: \A_S x = b_S\}$, the feasible set is then defined as the intersection $L(S)\cap \K_S$.

A TU game is said to be \emph{monotonic} if $S\subseteq T$ implies $\nu(S)\leq \nu(T)$.

\begin{lemma} \label{th:monotonicity}
Let a TU conic game be such that the vector $c_S$ satisfies $\supp(c_S)=B$ and $c_S|_B=c$ for all $S \subseteq N$. Moreover, for each $S \subseteq T$ and every feasible solution $x \in L(S) \cap \mathcal{K}_S$, there exists $w \in L(T) \cap \mathcal{K}_T$ such that $w|_B = x|_B$. Hence, the game is monotonic.
\end{lemma}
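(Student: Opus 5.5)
The plan is a direct feasibility-transfer argument, run for the maximization version of \eqref{eq:ccg}, which is the profit setting where monotonicity $\nu(S)\le\nu(T)$ is the natural property. Fix $S\subseteq T$. The first step is to observe that, since $\supp(c_S)=B$ and $c_S|_B=c$ for every coalition, the objective depends only on the coordinates in $B$. Concretely, for any $x\in V$ I will write
\begin{equation*}
\ip{c_S}{x}_V=\ip{c}{x|_B}=\ip{c_T}{x}_V .
\end{equation*}
Here $x|_B$ denotes the restriction, or projection, of $x$ onto the coordinates indexed by $B$, read through the pairing so that $c_S$ vanishes off $B$. So both coalitions share one objective functional, and it sees only the $B$-part of the decision vector.

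Next I take an arbitrary feasible $x\in L(S)\cap\K_S$. The hypothesis supplies some $w\in L(T)\cap\K_T$ with $w|_B=x|_B$. By the first step,
\begin{equation*}
\ip{c_S}{x}_V=\ip{c}{x|_B}=\ip{c}{w|_B}=\ip{c_T}{w}_V\le \nu(T),
\end{equation*}
and the last inequality holds because $w$ is feasible for the program defining $\nu(T)$. Taking the supremum over all feasible $x$ then gives $\nu(S)\le\nu(T)$.

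Several degenerate cases need a line of care. If $L(S)\cap\K_S=\emptyset$, then $\nu(S)=-\infty$ under the usual convention, and the inequality is trivial. If the supremum defining $\nu(S)$ is not attained, which can happen in conic programming as remarked after weak duality, the argument still works because it compares values feasible point by feasible point rather than through optimal solutions. If $\nu(S)=+\infty$, the same map sends feasible points of $S$ with arbitrarily large values to feasible points of $T$ with the same values, so $\nu(T)=+\infty$ as well.

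I do not expect a real obstacle. The only delicate point is making the notation $\supp$ and $|_B$ precise in an abstract space $V$ with dual $V^*$. I would fix a coordinate or direct-sum decomposition $V=V_B\oplus V_{B^c}$ in which $c_S$ annihilates $V_{B^c}$, so that $\ip{c_S}{x}_V$ depends only on the $V_B$-component of $x$. Once that is set up, the proof is the two-line chain of equalities and inequalities above.
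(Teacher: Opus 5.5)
Your proof is correct and is the same feasibility-transfer argument the paper uses: a feasible point for $S$ is mapped to a feasible point for $T$ with the same $B$-part, hence the same objective value, and that value is then compared with $\nu(T)$. The one difference is that you run the argument over arbitrary feasible points and take the supremum, whereas the paper starts from an optimal solution $x^\star$ for $S$; your version therefore does not need the optimum to be attained, and it also covers the infeasible and unbounded cases.
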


\begin{proof}
    Let $S\subseteq T$ and $x^\star$ be an optimal solution to the conic program \eqref{eq:ccg} for $S$. Since $x^\star$ is a feasible solution, by hypothesis exists $w$ feasible to the conic program for $T$. Hence, $$\nu(T)\geq \ip{c_T}{w}=\ip{c}{w|_B}=\ip{c}{x^\star|_B}=\ip{c_S}{x^\star}=\nu(S),$$
    where the first inequality comes from feasibility, the last equality comes from optimality, and the intermediate equalities come from the hypothesis on the objective function.
\end{proof}

A TU game  is said to be \emph{superadditive} if for all $S,T\subseteq N$ such that $S\cap T = \emptyset$ then $\nu(S\cup T)\geq \nu(S)+\nu(T)$. 

\begin{lemma}\label{th:subadditivity}
Let a TU conic game be such that the vector $c_S$ satisfies $\supp(c_S)=B$ and $c_S|_B=c$ for all $S \subseteq N$. Moreover, for each $S, T \subseteq N$ with $S \cap T = \emptyset$, and for feasible solutions $x_S \in L(S) \cap \mathcal{K}_S$ and $x_T \in L(T) \cap \mathcal{K}_T$, there exists $w \in L(S \cup T) \cap \mathcal{K}_{S \cup T}$ such that $w|_B = x_S|_B + x_T|_B$. Hence, the game is superadditive.
\end{lemma}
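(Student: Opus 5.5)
The argument mirrors the proof of Lemma~\ref{th:monotonicity}, with the single witness replaced by the sum of two optimal solutions. Throughout I take the maximization (profit) version of \eqref{eq:ccg}, since superadditivity is a lower bound on $\nu(S\cup T)$.

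Fix disjoint coalitions $S,T\subseteq N$. Let $x_S^\star$ and $x_T^\star$ be optimal solutions of \eqref{eq:ccg} for $S$ and $T$, respectively, so that $x_S^\star\in L(S)\cap\K_S$ and $x_T^\star\in L(T)\cap\K_T$. Because these are feasible and $S\cap T=\emptyset$, the hypothesis provides some $w\in L(S\cup T)\cap\K_{S\cup T}$ with $w|_B=x_S^\star|_B+x_T^\star|_B$.

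Next I use feasibility of $w$, the assumption $\supp(c_{S\cup T})=B$ with $c_{S\cup T}|_B=c$, and linearity of the pairing:
\begin{equation*}
\nu(S\cup T)\geq \ip{c_{S\cup T}}{w}=\ip{c}{w|_B}=\ip{c}{x_S^\star|_B}+\ip{c}{x_T^\star|_B}=\ip{c_S}{x_S^\star}+\ip{c_T}{x_T^\star}=\nu(S)+\nu(T).
\end{equation*}
The penultimate equality holds because $c_S$ and $c_T$ are also supported on $B$ and agree with $c$ there. The last equality is optimality of $x_S^\star$ and $x_T^\star$.

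The main obstacle is attainment, since a conic program need not attain its optimal value (see the discussion of strong duality). If optima are not attained, I avoid optimal solutions altogether. I take feasible $x_S$ and $x_T$ with $\ip{c_S}{x_S}\geq\nu(S)-\varepsilon$ and $\ip{c_T}{x_T}\geq\nu(T)-\varepsilon$, run the same chain of equalities to get $\nu(S\cup T)\geq\nu(S)+\nu(T)-2\varepsilon$, and then let $\varepsilon\to0$.

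Two degenerate cases also need a word. If the program for $S$ or for $T$ is infeasible, its value is $-\infty$ and the inequality is trivial. If $\nu(S)$ or $\nu(T)$ is $+\infty$, the same argument applied along a sequence of feasible points shows $\nu(S\cup T)=+\infty$.
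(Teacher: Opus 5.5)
Your proof is correct and follows the paper's argument: take optimal solutions $x_S^\star$ and $x_T^\star$, use the hypothesis to get $w$ feasible for $S\cup T$, and run the same chain of equalities based on $\supp(c_S)=B$ and $c_S|_B=c$. The $\varepsilon$-optimal version for the case where optima are not attained, and your remarks on infinite values, add rigor the paper omits, since it simply assumes optimal solutions exist. One caveat on the latter: the case where one program is infeasible and the other unbounded leaves $-\infty+\infty$ undefined, so it is not quite "trivial."
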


\begin{proof}
    Let $S\cap T = \emptyset$ and $x_S^\star, x^\star_T$ be optimal solutions to the conic program \eqref{eq:ccg} for $S$ and $T$, respectively. By hypothesis exists $w$ such that it is feasible to the conic program for $S\cup T$. Hence, 
    \begin{multline*}
        \nu(S\cup T)\geq \ip{c_{S\cup T}}{w}=\ip{c}{w|_B}=\ip{c}{x_S^\star|_B+x_T^\star|_B}\\=\ip{c}{x_S^\star|_B}+ \ip{c}{x_T^\star|_B}=\ip{c_S}{x_S^\star} + \ip{c_T}{x_T^\star}=\nu(S)+\nu(T),
    \end{multline*}
    where the first inequality comes from feasibility, the last equality comes from optimality, and the intermediate equalities come from the hypothesis on the objective function.
\end{proof}

These definitions (and results) naturally extend to cost TU conic games. We conclude this section with an illustrative example in polynomial games.

\begin{example}[Cooperative polynomial games]
    Despite (competitive) polynomial games having been studied in the literature~\citep[see, e.g.,][]{dresher2016polynomial, parrilo2006polynomial, laraki2012semidefinite}, we consider in this example cooperative games, where each agent $i \in N$ is represented by a variable $x_i$ and the goal of the grand coalition is to minimize the cost defined by a polynomial function $f \in \mathbb{R}_{2d}[x]$, where $x = (x_i)_{i \in N}$. Then, the cost generated by the cooperation of the players in $S$, without the participation of the remaining players in $N \setminus S$, is obtained by minimizing $f_S \in \mathbb{R}_{2d}[x]$, defined by the substitution $x_i = 1$ for all $i \in N \setminus S$. Then, the characteristic function $\nu(S)$ of the game can be represented and computed by the following primal-dual pair of conic programs~\cite[see][Section 5.2]{lasserre2009moments}:
{
    \begin{multicols}{2} 
\noindent 
\begin{minipage}{0.48\textwidth}
\begin{align*}
    \minimize \quad  &\; \ip{f_S}{y}_{\R_{2d}[x]^*}:=y(f_S)\\
    \st \quad &\; 1y:=y(1) = 1,\\
                        &\; y\in \M_d^+;
\end{align*}
\end{minipage}

\columnbreak 

\noindent
\begin{minipage}{0.48\textwidth}
\begin{align*}
    \maximize \quad &\; \ip{1}{\lambda}_\R:=\lambda\\
    \st \quad &\; f_S-\lambda \in \Sigma_d[x].
\end{align*}
\end{minipage}
\end{multicols}
}
    Then, for each coalition $S\subseteq N$ there exist parameters $c_S=f_S\in \R_{2d}[x]$, $b_S=1\in \R$, $\A_S=1:\R_{2d}[x]^*\to \R$ with $y\mapsto y(1)$, and $\K_S=\M_d^+$ whose dual cone is $(\M_d^+)^*=\Sigma_d[x]$, such that $\nu(S)$ is the optimal value of the corresponding conic program. Thus, this cooperative polynomial game can be seen as a monotonic cost TU conic game. Here, $y$ represents the decision vector and $\lambda$ represents the dual price vector.
\end{example}

The remainder of the paper is devoted to demonstrating how the proposed framework facilitates the analysis of cooperative behavior in several significant contexts arising in mathematical programming, engineering, economics, and operations research.

\section{Cooperation in Multi-Agent Optimal Control}\label{sec:control}

Consider the class of distributed linear time-invariant (LTI) differential systems composed of a finite set $N$ of interconnected subsystems, each governed by a local controller or agent. The dynamics of agent $i \in N$ are given by the following model:
\begin{gather*}
\frac{d}{dt}x_i(t) = A_{ii}x_i(t) + B_{ii}u_i(t) + w_i(t),\\
w_i(t) = \sum_{j \in N \setminus \{i\}} \left[A_{ij}x_j(t) + B_{ij}u_j(t)\right],
\end{gather*}
where $x_i : \mathbb{R}_+ \to \mathbb{R}^{q_i}$ is the state of agent $i$, $u_i : \mathbb{R}_+ \to \mathbb{R}^{r_i}$ is its corresponding control input, and $A_{ij} \in \mathbb{R}^{q_i \times q_j}$, $B_{ij} \in \mathbb{R}^{q_i \times r_j}$ for all pairs $i,j \in N$ denote the state-transition and input-to-state matrices, respectively. The term $w_i : \mathbb{R}_+ \to \mathbb{R}^{q_i}$ represents the influence of other agents on agent $i$.
Each subsystem is controlled by a different agent that has access only to its own state $x_i$ and chooses the value of its corresponding control input $u_i$.

Given a coalition of agents $S \subseteq N$, the agents in $S$ exchange all the information required to coordinate their actions. In other words, the coalition $S$ jointly optimizes the cost of its members by choosing their control inputs cooperatively, that is, $u_S(t) := (u_i(t))_{i \in S}$. In this case, the dynamics of the coalition are described by
\begin{gather*}
\frac{d}{dt}x_S(t) = A_{S}x_S(t) + B_{S}u_S(t) + w_S(t),\\
w_S(t) = \sum_{j\in N\setminus S} \left[A_{Sj}x_j(t) + B_{Sj}u_j(t)\right],
\end{gather*}
where $x_S(t) := (x_i(t))_{i \in S}$ is the aggregate state of the subsystems in $S$, $A_S := (A_{ij})_{i,j \in S}$, $B_S := (B_{ij})_{i,j \in S}$; $A_{Sj} := (A_{ij})_{i \in S}$ and $B_{Sj} := (B_{ij})_{i \in S}$ for all $j \in N \setminus S$.

The output of each agent is defined as follows:
\begin{equation*}
    z_i(t) = C_{i}x_i(t) + D_{i}u_i(t)
\end{equation*}
satisfying $D_i^\top D_i$ is regular and $D_i^\top C_i=0$. Analogously, the output of $S\subseteq N$ is given by
\begin{equation*}
    z_S(t) = C_{S}x_S(t) + D_{S}u_S(t),
\end{equation*}
where $C_S=\diag(C_i)_{i\in S}$, and $D_S=\diag(D_i)_{i\in S}$.

From a centralized viewpoint, given an initial state $x(0)$, the control problem can be posed as the following linear-quadratic (LQ) optimal control problem:
\begin{equation}\label{eq:lq}\tag{LQ}
    \minimize_{u_N(t)\in \R^R} \quad J(u_N,x(0)) = \int_0^\infty z_N(t)^\top z_N(t) dt,
\end{equation}
where $R:=\sum_{i\in N}r_i$.

The cooperative games arising in optimal distributed control follow the same general rationale. The objective of every coalition is to minimize the global output energy of the centralized system. Accordingly, the output energy achieved by a coalition of players, without the participation of the remaining ones, is given by the best performance attainable under its restricted control strategy. In this context, two families of optimal control games arise: (i) games in which the players are the agents and the coalition controller is restricted to act on a linear subspace; and (ii) games in which the action of the controllers is determined by the links of a communication network. In the latter case, the set of edges $C$ of the complete graph $(N,C)$ constitutes the set of players.

The remainder of this section is due to prove that both classes of games are cost TU conic games.

\subsection{Acting under Coalitional Restriction}

We consider the first family of games. In general, the objective of a coalition $S$ is to minimize the overall output energy subject to a coalitionally restricted control action. Formally, for each $S \subseteq N$, the global controller $u_N$ is constrained to belong to a linear subspace $U_S \subseteq \mathbb{R}^{R}$, that is, $u_N : \mathbb{R}_+ \to U_S$, with the property that $S \subseteq T$ implies $U_S \subseteq U_T$ and $U_N = \mathbb{R}^{R}$. Suppose that $u_N(t) = Kx_N(t)$. Since the control input is a linear function of the state, this is referred to as a \emph{state-feedback} controller, and the matrix $K$ is called the \emph{state-feedback gain}. This yields a closed-loop LTI system. To ensure that the restricted control action is feasible, it is necessary to assume the existence of: (i) a state-feedback gain $K$ such that ${\rm Im}(K) \subseteq U_S$ and that stabilizes the overall system; and (ii) a quadratic Lyapunov function $f(\xi)=\xi^\top P_S\xi$ for the closed-loop system that provides an upper bound on the output energy. This situation induces a cooperative game $(N,\nu)$, where 
\begin{equation*}
    \nu(S) \geq \min_{u_N} \left\{ J(u_N,x(0)) = \int_0^\infty z_N(t)^\top z_N(t) dt \; : \; u_N(t)\in U_S\right\},
\end{equation*}
is the best upper bound on the output energy that can be certified by quadratic Lyapunov functions under the restricted controller $u_N : \mathbb{R}_+ \to U_S$.

\begin{theorem}\label{th:acting}
     The game $(N,\nu)$ is a monotonic TU conic game. Furthermore, the characteristic function can be represented by the following semidefinite program {\rm (SDP):}
     \begin{subequations}\label{model1}
     \begin{align}
    \nu(S) = \min_{Q\succ 0, Y} \quad & \gamma \\
    \sta \quad & Y=\Pi_SY, \label{ctr:proj} \\
    & \begin{bmatrix}
   \gamma & x(0)^\top\\
   x(0) & Q
\end{bmatrix} \succeq 0, \label{ctr:inverse}\\
    & \begin{bmatrix}
   A_{N}Q + QA_{N}^\top + B_{N}Y + Y^\top B_{N}^\top & (C_NQ + D_NY)^\top\\
   C_NQ + D_NY & -I
\end{bmatrix} \preceq 0\label{ctr:stab},
\end{align}
\end{subequations}
where $\Pi_S$ stands for the orthogonal projection matrix onto the space $U_S$.
\end{theorem}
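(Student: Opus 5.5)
The plan is to derive the SDP from the standard Lyapunov-based $H_2$/LQ bound for state-feedback controllers, and then to read monotonicity off Lemma~\ref{th:monotonicity}.

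\textbf{Step 1: the certified bound for a fixed gain.} Fix a stabilizing state-feedback gain $K$ with ${\rm Im}(K)\subseteq U_S$. The closed loop is $\dot x = (A_N+B_NK)x$ with output $z=(C_N+D_NK)x$. A quadratic function $\xi^\top P\xi$ with $P\succ0$ certifies the output energy bound $J\le x(0)^\top P x(0)$ exactly when
\[
\tfrac{d}{dt}\,x^\top Px \le -z^\top z
\]
along trajectories. Integrating from $0$ to $\infty$ and using stability ($x(t)\to0$) gives the bound. In matrix form the condition reads
\[
(A_N+B_NK)^\top P + P(A_N+B_NK) + (C_N+D_NK)^\top (C_N+D_NK)\preceq 0 .
\]
Hence $\nu(S)$ is the infimum of $x(0)^\top P x(0)$ over all pairs $(P,K)$ satisfying this inequality with ${\rm Im}(K)\subseteq U_S$.

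\textbf{Step 2: linearising change of variables.} Set $Q=P^{-1}$ and $Y=KQ$, and congruence-transform the matrix inequality by $Q$. This gives
\[
A_NQ+QA_N^\top+B_NY+Y^\top B_N^\top+(C_NQ+D_NY)^\top(C_NQ+D_NY)\preceq0 .
\]
I transpose the quadratic term consistently with the block form in \eqref{ctr:stab}; the dimensions should be checked here. A Schur complement with respect to the $-I$ block then yields exactly \eqref{ctr:stab}.

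\textbf{Step 3: the remaining constraints.} The range condition ${\rm Im}(K)\subseteq U_S$ is equivalent to ${\rm Im}(Y)\subseteq U_S$, since $Q$ is invertible. This in turn is equivalent to $Y=\Pi_SY$, which is \eqref{ctr:proj}. Next, minimising $\gamma$ subject to $\gamma\ge x(0)^\top Q^{-1}x(0)$ is, again by a Schur complement (using $Q\succ0$), the same as \eqref{ctr:inverse}. The map $(Q,Y,\gamma)\mapsto$ (constraint data) is affine, so \eqref{model1} is a linear program over a product of semidefinite cones. After splitting $Y$ into free variables, or equivalently writing it as a difference of cone variables, and adding slack PSD matrices, it takes the standard form of Definition~\ref{def:cp}. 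This shows the game is a cost TU conic game.

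\textbf{Step 4: monotonicity.} For $S\subseteq T$ we have $U_S\subseteq U_T$, so $\Pi_T\Pi_S=\Pi_S$. Any $Y$ with $Y=\Pi_SY$ therefore also satisfies $Y=\Pi_TY$. The other constraints do not depend on the coalition, and the objective $\gamma$ is the same for every coalition. Hence the hypothesis of Lemma~\ref{th:monotonicity} holds with $B$ the $\gamma$-coordinate, taking $w=x$.

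For a cost game this inclusion of feasible sets gives $\nu(T)\le\nu(S)$. This is the cost version of monotonicity, which is the sense the paper says extends naturally to cost games.

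\textbf{Main obstacle.} The delicate points are the strict constraint $Q\succ0$ and the fact that the optimum may not be attained. The strictness is not a closed-cone constraint, so $\nu$ should be read as an infimum. I would handle this either by replacing $Q\succ0$ with $Q\succeq0$, which is harmless because \eqref{ctr:inverse} forces $x(0)\in{\rm Im}(Q)$, or by arguing via closure that the optimal value is unchanged.
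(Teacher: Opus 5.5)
Your proposal is correct and follows essentially the paper's argument: the same change of variables $Q=P^{-1}$, $Y=KQ$, the same two Schur complements, and ${\rm Im}(Y)={\rm Im}(K)$ for the projection constraint. The only differences are that you run these equivalences from a certified pair $(P,K)$ to the SDP, whereas the paper goes from an SDP optimum back to $K^\star=Y^\star(Q^\star)^{-1}$, and that you also check monotonicity (in the cost sense $\nu(T)\le\nu(S)$) by feasible-set inclusion, which the paper's proof leaves implicit.

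One minor correction: relaxing $Q\succ 0$ to $Q\succeq 0$ is not justified by $x(0)\in{\rm Im}(Q)$ alone, since a singular feasible $Q$ yields no pair $(P,K)$. Your closure route is the right one: the standing assumption supplies a feasible point with $Q_0\succ 0$, and taking convex combinations with it shows the infimum is unchanged.
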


\begin{proof}
    Let $(Q^\star, Y^\star, \gamma^\star)$ be an optimal solution of problem \eqref{model1}. By the Schur complement we have that the linear matrix inequality (LMI) \eqref{ctr:stab} implies
\begin{equation*}
        A_{N}Q^\star + Q^\star A_{N}^\top + B_{N}Y^\star + (Y^{\star})^\top B_{N}^\top + (C_NQ^\star + D_NY^\star)^\top(C_NQ^\star + D_NY^\star) \preceq 0.
    \end{equation*}

    With $P^\star=(Q^{\star})^{-1} \succ 0$ and left- and right-multiplying with $P^\star$. Then we have
 \begin{equation*}
        (A_N + B_NY^\star P^\star)^\top P^\star + P^\star (A_N+B_NY^\star P^\star) + (C_N+D_NY^\star P^\star)^\top(C_N+D_NY^\star P^\star) \preceq 0.
    \end{equation*}

    Now, taking the state-feedback gain $K^\star=Y^\star (Q^{\star})^{-1}=Y^\star P^\star$,
 \begin{equation*}
        (A_N + B_NK^\star)^\top P^\star + P^\star (A_N+B_NK^\star) + (C_N+D_NK^\star)^\top(C_N+D_NK^\star) \preceq 0.
\end{equation*}

    Then, it follows that $x(0)^\top P^\star x(0)$ is an upper bound on the output energy $J(u_N,x(0))$ for $u_N(t)=K^\star x(t)$~\citep[see][]{boyd1994linear}. 

Second, by Schur complement and optimality, we have that the LMI \eqref{ctr:inverse} implies $\gamma^\star = x(0)^\top (Q^{\star})^{-1}x(0)=x(0)^\top P^\star x(0)=\nu(S)$ is the best upper bound via quadratic functions. 

Finally, by \eqref{ctr:proj}$, K^\star = Y^\star (Q^{\star})^{-1} = \Pi_S Y^\star (Q^{\star})^{-1}=\Pi_S K^\star$, what implies $u_N(t)\in U_S$, what concludes the proof.
    
\end{proof}

\begin{theorem}
    Given $S$, let $\Pi_S$ be the orthogonal projection matrix onto the space $U_S$. Then $(N,\nu)$ is equal to the \eqref{eq:lq} optimal control game $(N,\nu_{\rm LQ})$ where
$$
\nu_{\rm LQ} (S) := \min_{u_N(t)\in \R^R} J(u_N,x(0))
$$
with the input data $(A_N, B_S:=B_N\Pi_S, C_N, D_S:=D_N\Pi_S)$. Moreover, $\nu(S)$ has the following simpler conic representation in primal {\rm SDP} standard form:
\begin{align*}
    \nu(S) = \min_{Z\in \Ss^n} \quad & \ip{C^\top C}{Z_n} + \ip{D_S^\top D_S}{Z_b}\\
    \sta  \quad & Z_u^\top B_S^\top + B_SZ_u + Z_nA^\top + AZ_n = -x_0x_0^\top\\
    & Z := \begin{bmatrix}
        Z_b & Z_u \\
        Z_u^\top & Z_n
    \end{bmatrix} \succeq 0.
\end{align*}
\end{theorem}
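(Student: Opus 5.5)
Proving the theorem comes down to showing three things. First, the Lyapunov-certified bound $\nu(S)$ of Theorem~\ref{th:acting} equals the true LQ optimum with the modified data $(A_N,B_S,C_N,D_S)$. Second, that optimum is attained by a state feedback with range in $U_S$. Third, the LQ value admits the stated primal SDP description, obtained as the conic dual of the LMI program \eqref{model1}.

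\textbf{Step 1: reduce to an unconstrained LQ problem.} Every input $u_N(t)\in U_S$ can be written as $u_N=\Pi_S v$ with $v$ unrestricted. Conversely, every $v$ gives an admissible input $\Pi_S v$. Hence minimizing $J$ over $u_N:\R_+\to U_S$ is the same as minimizing over free $v$ for the system $(A_N, B_N\Pi_S, C_N, D_N\Pi_S)$.

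The standing assumptions $D_i^\top C_i=0$ with $C_N,D_N$ block diagonal give $D_S^\top C_N=\Pi_S D_N^\top C_N=0$. So there are no cross terms in the cost.

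\textbf{Step 2: identify the LQ value.} $D_S^\top D_S$ may be singular on $U_S^\perp$, so I would restrict the input space to $U_S$, where $D_N^\top D_N$ is regular, and apply standard LQR theory. I assume the stabilizability of Theorem~\ref{th:acting}. Then the minimum is $x(0)^\top P x(0)$, where $P$ is the stabilizing solution of the algebraic Riccati equation, and it is attained by a linear feedback $K=\Pi_S K$.

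\textbf{Step 3: show $\nu(S)=\nu_{\rm LQ}(S)$.} This is the main obstacle: it is exactly the claim that quadratic Lyapunov certificates are tight for LQR.
\begin{itemize}
\item The inequality $\nu(S)\ge\nu_{\rm LQ}(S)$ follows from the proof of Theorem~\ref{th:acting}: each feasible $(Q,Y,\gamma)$ certifies $\gamma\ge J$ for an admissible feedback.
\item For the reverse inequality, take the Riccati solution $P$ and set $Q=P^{-1}$, $Y=KQ$. Then \eqref{ctr:stab} holds with equality in the Schur-complement sense, and $\gamma=x(0)^\top Px(0)$.
\item If the stabilizing $P$ is only positive semidefinite, I would perturb to $P+\epsilon I$ and pass to the limit. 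This is why the minimum is interpreted as an infimum over $Q\succ0$.
\end{itemize}

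\textbf{Step 4: derive the primal SDP.} The shortest route uses the cost directly. For the closed loop $\dot x=(A+B_SK)x$, define the Gramian $Z_n=\int_0^\infty x x^\top\,dt$ and the cross and input blocks $Z_u=\int x u^\top$, $Z_b=\int u u^\top$; I arrange the block orientation so it matches the displayed constraint.

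Integrating $\frac{d}{dt}(xx^\top)$ from $0$ to $\infty$ and using stability gives
\[
AZ_n+Z_nA^\top+B_SZ_u+Z_u^\top B_S^\top=-x_0x_0^\top .
\]
The cost becomes $\ip{C^\top C}{Z_n}+\ip{D_S^\top D_S}{Z_b}$, since the cross terms vanish by $D_S^\top C=0$. The matrix $Z$ is positive semidefinite because it is an integral of outer products $(u;x)(u;x)^\top$. This shows the SDP value is at most $\nu_{\rm LQ}(S)$.

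For equality, I would invoke conic weak and strong duality from Section~\ref{sec:prelims}. I expect this SDP to be the Lagrangian dual of \eqref{model1} written in $P$-variables, namely maximizing $x_0^\top P x_0$ subject to a Riccati-type LMI. Its Slater point is a strictly stabilizing $P$ (e.g.\ a scaled negative definite $P$), which gives zero duality gap.

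Checking that Slater condition, and the attainment needed for singular $D_S^\top D_S$, is the delicate part. I would handle it by restricting all variables to $U_S$ coordinates.
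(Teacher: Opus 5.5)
\textbf{Comparison with the paper.} Your route differs from the paper's. The paper proves $\nu=\nu_{\mathrm{LQ}}$ purely at the LMI level. A feasible $(Q,Y,\gamma)$ of \eqref{model1} is feasible for \eqref{model2} because $B_NY=B_N\Pi_SY$. Conversely, $(Q,\Pi_SY,\gamma)$ is feasible for \eqref{model1} because $\Pi_S^2=\Pi_S$. The paper then simply asserts that \eqref{model2} computes the LQ value and cites Willems for the $Z$-program. That is short and algebraic, but it hands the real content to the literature. Your Step~1 is the trajectory-level counterpart, and your Step~3 actually proves the tightness of quadratic Lyapunov certificates, which the paper takes for granted.

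Steps 2--3 need two repairs:
\begin{itemize}
\item $P+\epsilon I$ does not preserve the Riccati inequality. It adds $\epsilon(A_K+A_K^\top)$, with $A_K:=A+B_SK$, and this need not be $\preceq 0$. Use $P+\epsilon X$ instead, where $A_K^\top X+XA_K=-I$.
\item Step~2 needs detectability of $(C_N,A_N)$, or (LQ) restricted to stabilizing inputs. For $A=B=D=1$, $C=0$, the unconstrained infimum is $0$ (take $u\equiv 0$), while \eqref{model1} and the $Z$-program both give $2x_0^2$. The paper is equally silent on this point.
\end{itemize}

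\textbf{The gap: the reverse inequality in Step~4.} This step does not go through as you planned it, for two reasons.

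First, the dual of the $Z$-program is not \eqref{model1} in $P$-variables. Program \eqref{model1} minimizes an upper bound certified by the Riccati inequality in the $\preceq 0$ direction. The dual is instead Willems' lower-bound program: maximize $x_0^\top Px_0$ subject to
\[
M(P):=\begin{bmatrix} D_S^\top D_S & B_S^\top P\\ PB_S & A^\top P+PA+C^\top C\end{bmatrix}\succeq 0 .
\]

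Second, your Slater point does not work. When $U_S\neq\R^R$, $M(P)$ is singular for every $P$, since $(v,0)\in\ker M(P)$ for all $v\in U_S^\perp$. Even after restricting to $U_S$-coordinates, a scaled negative definite $P$ fails to be strictly feasible, for example when $C=0$ and $A$ is not Hurwitz.

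No constraint qualification is needed, though. For every feasible $Z$ and every symmetric $P$, the equality constraint gives
\[
\ip{C^\top C}{Z_n}+\ip{D_S^\top D_S}{Z_b}-x_0^\top Px_0=\ip{M(P)}{Z}.
\]
Take $P_+$, the stabilizing Riccati solution. In $U_S$-coordinates, the Schur complement of $M(P_+)$ is exactly the Riccati residual, which is zero, so $M(P_+)\succeq 0$. Hence every feasible $Z$ costs at least $x_0^\top P_+x_0=\nu_{\mathrm{LQ}}(S)$. Combined with your Gramian construction for the optimal stabilizing feedback, this gives equality and shows the primal minimum is attained.
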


\begin{proof}
    We prove that problem \eqref{model1} and the problem
\begin{align}
   \nu_{\rm LQ}(S) = \min_{Q\succ 0, Y} \quad & \gamma\label{model2}\\
    \sta \quad & \begin{bmatrix}
   \gamma & x(0)^\top\\
   x(0) & Q
\end{bmatrix} \succeq 0, \nonumber\\
    & \begin{bmatrix}
   A_{N}Q + QA_{N}^\top + B_{S}Y + Y^\top B_{S}^\top & (C_NQ + D_SY)^\top\\
   C_NQ + D_SY & -I
\end{bmatrix} \preceq 0,\nonumber
\end{align}
    are equivalent. First, assume a feasible solution $(Q,Y,\gamma)$ for \eqref{model1}, then 
\begin{equation*}
    \begin{bmatrix}
   A_{N}Q + QA_{N}^\top + B_{N}\Pi_SY + Y^\top\Pi_S B_{N}^\top & (C_NQ + D_N\Pi_S Y)^\top\\
   C_NQ + D_N\Pi_S Y & -I
\end{bmatrix} \preceq 0
\end{equation*}
so $(Q,Y,\gamma)$ is feasible for \eqref{model2}. Proceeding backwards, assume $(Q, Y, \gamma)$ is feasible for \eqref{model2}. Then, we see $(Q, \Pi_S Y,\gamma)$ is feasible for \eqref{model1}. The two LMIs \eqref{ctr:inverse}-\eqref{ctr:stab} are verified, and $\Pi_S^2Y = \Pi_S Y$ holds due to the property of the orthogonal projection. Since a feasible solution of one problem induces a feasible solution of the other, and vice versa, with the same objective value, the two problems are equivalent.

Problem \eqref{model2} solves the corresponding \eqref{eq:lq} optimal control problem because it is unconstrained on the control input, so the already announced equality with $\nu_{\rm LQ}$ holds.

Finally, we can use the primal-dual pair of semidefinite programs associated with the algebraic Riccati equation (that provides the solution to \eqref{eq:lq}, see \cite{willems1971least}):
{
\setlength{\columnsep}{1.3cm}
\begin{multicols}{2} 
\noindent 
\begin{minipage}{0.4\textwidth}
\begin{align*}
     \min_{Z\in\Ss^n} & \ip{C^\top C}{Z_n} + \ip{D_S^\top D_S}{Z_b}\\
    \sta \; & Z_u^\top B_S^\top + B_SZ_u + Z_nA^\top + AZ_n = -x_0x_0^\top,\\
    & Z:=\begin{bmatrix}
        Z_b & Z_u \\
        Z_u^\top & Z_n
    \end{bmatrix} \succeq 0;
\end{align*}
\end{minipage}

\columnbreak

\noindent
\begin{minipage}{0.4\textwidth}
\begin{align*}
    \max_{P\in\Ss^n} & x(0)^\top Px(0)\\
    \sta  & \begin{bmatrix}
   D_S^\top D_S & B_S^\top P\\
   PB_S & A^\top P + PA + C^\top C
\end{bmatrix}  \succeq 0;
\end{align*}
\end{minipage}
\end{multicols}
}
to compute the characteristic function $\nu_{\rm LQ}(S)=\nu(S)$.
 \end{proof}

This result shows that it is equivalent to restrict the action and to operate freely under a constrained control system, leading to simpler and diverse formulations for the same TU conic game.

\begin{example}[Activation/deactivation of controllers]
    A special case of the above game arises when, for each coalition $S$, only the controllers in $S$ are active, while the controllers outside $S$ are switched off. In this case, the linear subspace $U_S$ corresponds to the projection onto the variables $r_i$, $i \in S$, and the associated orthogonal projection matrix $\Pi_S$ is a diagonal matrix with ones in the rows indexed by $r_i$, $i \in S$, and zeros elsewhere.
\end{example}

\subsection{Acting under Network-Driven Communication}
Consider the complete graph $(N,C)$, where the set of players is identified with the set of edges $C$. In this framework, given a coalition $\Lambda \subseteq C$, we assume that the agents can only coordinate through a network whose physical topology is described by the graph $(N,\Lambda)$. A necessary and sufficient condition for any two agents to coordinate their control is that they belong to the same connected component of $(N,\Lambda)$; that is, there exists a path of links connecting them. The notion of connectedness induces a partition of the set $N$ into disjoint connected components. This partition is denoted by $N/\Lambda \subset 2^N$, where $S \cap T = \emptyset$ for $S,T \in N/\Lambda$ and $\bigcup_{S \in N/\Lambda} S = N$.

A feasible multi-agent control scheme induced by $(N,\Lambda)$ must satisfy that there exists a state-feedback gain $K_\Lambda$ adapted to its communication constraints that stabilizes the overall system, together with a quadratic Lyapunov function $f(\xi) = \xi^\top P_\Lambda \xi$ for the closed-loop system that provides an upper bound on the output energy and is also consistent with the communication constraints.

Under these conditions, one can define a cooperative game $(C,\nu)$ where the value $\nu(\Lambda)$ of each coalition $\Lambda$ is given by the best upper bound on the output energy that can be certified via quadratic Lyapunov functions of the multi-agent control scheme induced by $(N,\Lambda)$.

This multi-agent control scheme and its connection with cooperative game theory were introduced in \cite{maestre2014coalitional} for distributed LTI difference systems, which constitute the discrete-time analogue of our differential system.

    \begin{theorem}
        The game $(C,\nu)$ is a monotonic TU conic game. Furthermore, the characteristic function can be represented by the following {\rm SDP:}
     \begin{align*}
    \nu(\Lambda) = \min_{Q\succ 0, Y} \quad & \gamma\\
    \sta \quad & Y=\sum_{S\in N/\Lambda} \Pi'_S Y\Pi_S,\\
    & Q=\sum_{S\in N/\Lambda} \Pi_S Q\Pi_S,\\
    & \begin{bmatrix}
   \gamma & x(0)^\top\\
   x(0) & Q
\end{bmatrix} \succeq 0, \\
    & \begin{bmatrix}
   A_{N}Q + QA_{N}^\top + B_{N}Y + Y^\top B_{N}^\top & (C_NQ + D_NY)^\top\\
   C_NQ + D_NY & -I
\end{bmatrix} \preceq 0,
\end{align*}
where $\Pi_S$ (respectively, $\Pi'_S$) denotes the block-diagonal projection matrix with identity blocks corresponding to $q_i$ (respectively, $r_i$), $i\in S$, and zeros elsewhere.
    \end{theorem}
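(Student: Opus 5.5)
We would follow the proof of Theorem~\ref{th:acting} closely, adding the two structural constraints. Fix $\Lambda\subseteq C$ and write $\mathcal P:=N/\Lambda$. The projections $\{\Pi_S\}_{S\in\mathcal P}$ are mutually orthogonal and sum to the identity; the same holds for $\{\Pi'_S\}_{S\in\mathcal P}$. Hence the constraint $Q=\sum_{S}\Pi_SQ\Pi_S$ says exactly that $Q$ is block diagonal with respect to $\mathcal P$. Likewise, $Y=\sum_S\Pi'_SY\Pi_S$ says that the inputs of component $S$ depend only on the states of component $S$. This is precisely the communication constraint induced by $(N,\Lambda)$.

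\textbf{Soundness.} Take a feasible $(Q,Y,\gamma)$. By Schur complements, as in Theorem~\ref{th:acting}, set $P=Q^{-1}$ and $K=YP$. The last LMI becomes the Lyapunov inequality
\[
(A_N+B_NK)^\top P+P(A_N+B_NK)+(C_N+D_NK)^\top(C_N+D_NK)\preceq 0,
\]
which certifies $J\le x(0)^\top Px(0)\le\gamma$. The key additional point is structural. The inverse of a block-diagonal positive definite matrix is block diagonal with the same blocks, so $P=\sum_S\Pi_SP\Pi_S$. Using the orthogonality relations $\Pi_S\Pi_T=\delta_{ST}\Pi_S$, we get
\[
K=YP=\sum_S\Pi'_SY\Pi_S\sum_T\Pi_TP\Pi_T=\sum_S\Pi'_S(YP)\Pi_S .
\]
Hence both the gain $K$ and the Lyapunov matrix $P$ are consistent with the communication constraints.

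\textbf{Completeness.} Conversely, take any admissible pair $(K_\Lambda,P_\Lambda)$ with both matrices structured. Set $Q=P_\Lambda^{-1}$, which is block diagonal, and $Y=K_\Lambda Q$, which is structured by the same computation. Reversing the Schur-complement steps, $(Q,Y,\gamma)$ with $\gamma=x(0)^\top P_\Lambda x(0)$ is feasible. Therefore the optimal value of the SDP equals $\nu(\Lambda)$.

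\textbf{Conic form.} The objective $\gamma$ is linear. The two structural conditions are linear equalities. The two block matrices are affine in $(Q,Y,\gamma)$, so they define a product of semidefinite cones $\Ss_+\times\Ss_+$ once slack variables are introduced. This matches Definition~\ref{def:ccg}. The one subtle point is the strict constraint $Q\succ0$. We would handle it as in Theorem~\ref{th:acting}: the first LMI with $\gamma$ finite only forces $Q\succeq 0$, so we replace $Q\succ0$ by $Q\succeq0$ and interpret the value as an infimum, arguing that this closure does not change it. Alternatively, one can impose $Q\succeq\epsilon I$.

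\textbf{Monotonicity (cost sense).} If $\Lambda\subseteq\Lambda'$, the partition $N/\Lambda$ refines $N/\Lambda'$: every $S\in N/\Lambda$ is contained in a unique $T\in N/\Lambda'$. A matrix that is block diagonal with respect to the finer partition is block diagonal with respect to the coarser one. Hence any feasible $(Q,Y,\gamma)$ for $\Lambda$ is feasible for $\Lambda'$ with the same objective value. This is the hypothesis of Lemma~\ref{th:monotonicity}, taking $B$ to be the $\gamma$-coordinate and $w=x$, adapted to minimization. It gives $\nu(\Lambda')\le\nu(\Lambda)$.

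We expect the main obstacle to be the structural step: verifying that block-diagonality survives inversion and that the product $YP$ retains the $\Pi'$–$\Pi$ structure. The careful treatment of the strict inequality $Q\succ0$ within the conic framework is a secondary difficulty.
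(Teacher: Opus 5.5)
Your proposal is correct and follows the paper's route. Block-diagonality of $Q$ survives inversion, so $K=YP$ inherits the $\Pi'_S(\cdot)\Pi_S$ structure, and the rest is the Schur-complement argument of Theorem~\ref{th:acting}. You go further than the paper, which leaves these points implicit, by spelling out three things: the converse direction (every structured pair $(K_\Lambda,P_\Lambda)$ gives a feasible $(Q,Y,\gamma)$), the partition-refinement argument for monotonicity in the cost sense $\nu(\Lambda')\le\nu(\Lambda)$ when $\Lambda\subseteq\Lambda'$, and the closure treatment of $Q\succ 0$. That closure is exact under strict feasibility, by moving along the segment toward a strictly feasible point, whereas your alternative $Q\succeq\epsilon I$ only gives an upper approximation of $\nu(\Lambda)$.
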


    \begin{proof}
Let $(Q^\star, Y^\star, \gamma^\star)$ be an optimal solution of the above program.  By the properties of $\Pi_S$ we have that $$(Q^{\star})^{-1}=\left(\sum_{S\in N/\Lambda}\Pi_SQ^\star\Pi_S\right)^{-1}=\sum_{S\in N/\Lambda}\Pi_S(Q^{\star})^{-1}\Pi_S$$ and therefore 
\begin{multline*}
    K^\star = Y^\star (Q^{\star})^{-1} = \left(\sum_{S\in N/\Lambda} \Pi'_S Y\Pi_S\right) \left( \sum_{S\in N/\Lambda}\Pi_S(Q^{\star})^{-1}\Pi_S\right)\\=\sum_{S\in N/\Lambda} \Pi'_S Y^\star (Q^{\star})^{-1}\Pi_S=\sum_{S\in N/\Lambda} \Pi'_S K^\star \Pi_S,
\end{multline*}
what implies $u_N(t)=K^\star x_N(t)$ is coherent with the communication multi-agent control scheme. 

The reader can easily complete the proof by verbatim following the proof of Theorem~\ref{th:acting}.
    \end{proof}

\begin{remark}
    These games are generally not subadditive; however, in the context of multi-agent optimal control, this is not a significant concern, as the objective is not necessarily to form the grand coalition, but rather to assess the value associated with different groups of agents. Nevertheless, it is possible to impose additional conditions on the Lyapunov matrices to guarantee subadditivity of the game. More specifically, for disjoint coalitions $S \cap T = \emptyset$, one may require $P_S + P_T \succeq P_{S \cup T}$. Furthermore, concavity can be imposed via $P_S + P_T \succeq P_{S \cup T} + P_{S \cap T}$ for all coalitions $S,T$. Recall that concavity in these games implies core nonemptiness and therefore balancedness, as well as stability of the Shapley value. These assumptions ensure that each coalition can bound the overall output energy in such a way that the resulting cooperative game is subadditive or concave, while remaining consistent with the system dynamics.

\end{remark}

\section{Cooperation in Production \label{s:clpg}}

This section is devoted to a broad class of special TU conic games that extend and generalize the celebrated linear production games \citep{Owen75}. Let $N$ be a set of agents, each endowed with a bundle of $m$ resources, represented by the mapping $b : N \to \mathbb{R}^m$. The resources themselves have no intrinsic value, that is, there is no primary demand for them. There is, however, a derived demand for these resources, as they can be used to produce goods that can be sold at given market prices. Let $x$ denote the decision vector of production levels, belonging to a proper cone $\mathcal{K}$; let $p$ denote the vector of market prices; and let $\mathcal{A}$ be the $m$-dimensional real linear operator that maps each production plan to its resource requirements.

Each agent $i \in N$ is interested in maximizing the utility of its resource endowment by solving the following pair of primal-dual conic programs:
{
\begin{multicols}{2} 
\noindent 
\begin{minipage}{0.48\textwidth}
\begin{align*}
    \maximize \quad  &\; \ip{p}{x}\\
    \st \quad &\; \A x= b(i),\\
                        &\; x\in \K;
\end{align*}
\end{minipage}

\columnbreak 

\noindent
\begin{minipage}{0.48\textwidth}
\begin{align*}
    \minimize \quad  &\; \ip{y}{b(i)}\\
    \st \quad &\; \A^* y-p\in \K^*.
\end{align*}
\end{minipage}

\end{multicols}
}

If the agents of a coalition $S\subseteq N$ cooperate, then they can pool their resources, namely $b(S):=\sum_{i\in S} b(i)$, and the problem to be considered by the coalition is given by:
{
\begin{multicols}{2}
\noindent 
\begin{minipage}{0.48\textwidth}
\begin{align*}\label{eq:conicpg-primal}\tag{\rm CPG-P}
    \maximize \quad  &\; \ip{p}{x}\\
    \st \quad &\; \A x= b(S),\\
                        &\; x\in \K;
\end{align*}
\end{minipage}

\columnbreak 

\noindent
\begin{minipage}{0.48\textwidth}
\begin{align*}\label{eq:conicpg-dual}\tag{\rm CPG-D}
    \minimize \quad &\; \ip{y}{b(S)}\\
    \st \quad &\; \A^* y - p \in \K^*.
\end{align*}
\end{minipage}

\end{multicols}
}

The natural questions that arise are the following: under which conditions are the agents in $N$ willing to cooperate, and how should the surplus generated by their cooperation be allocated among them? These questions can be addressed by introducing a cooperative game in which the worth of a coalition of agents $S$ is given by the value $\nu(S)$, representing the maximum reward that the members of $S$ can obtain by acting jointly.

We define the general \emph{conic production game} $(N,\nu)$ as the TU conic game whose characteristic function $\nu(S)$ is given by the optimal value of problem \eqref{eq:conicpg-primal}. Accordingly, we introduce the class of conic production games with $N$ players as the set of 5-tuples $(N,p,\mathcal{A},b,\mathcal{K})$. Each element of this class defines a cooperative game $(N,\nu)$ whose characteristic function is given, for every $S \subseteq N$, by

\begin{align*}\label{eq:conicpg}\tag{\rm CPG}
   \nu(S) :=  \max \quad  &\; \ip{p}{x}\\
    \sta \quad &\; \A x= b(S),\\
                        &\; x\in \K.
\end{align*}

Notice that the class of linear production games is a subclass of conic production games given by the $5$-tuple $(N, p, \A, b,\R^n_+)$.
The class of conic production games with $N$ players on the same cone is a cone itself. Indeed, the game with characteristic function $\lambda \nu$ corresponds to the 5-tuple $(N, \lambda p, \A, b,\K)$ which clearly is a conic production game. However,  adding two games $(N, p^1, \A^1, b^1,\K)$ and $(N, P^2, \mathcal{A}^2, b^2,\K)$ may simply turn out to be an infeasible problem for the corresponding characteristic function:
\begin{align*}
 (\nu^1+\nu^2)(S)=\max \quad & \ip{p^1+p^2}{x}\\
    \sta \quad &\; \A^1 x= b^1(S),\\
    & \; \A^2 x= b^2(S),\\
                        &\; x\in \K.
\end{align*}
Since the constraints of both conic problems could be incompatible. This implies that the class of conic production games is a cone but not necessarily convex. Nevertheless, if the constraints defining each one of the original games are compatible, then $(N, p^1+p^2, \mathcal{A}^{1}\times \mathcal{A}^2, b^1 \times b^2, \K)$ is another conic production game. 

Some additional structural properties of this class of games include monotonicity, superadditivity, and, most importantly, total balancedness. We will show that, under mild assumptions, every conic production game $(N,\nu)$ is totally balanced; that is, the core of the game and the cores of all its subgames are nonempty.

\begin{corollary}
The conic production game $(N,\nu)$ is monotonic and superadditive.
\end{corollary}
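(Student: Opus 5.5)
Since the statement is labelled a corollary, the plan is to obtain it directly from Lemma~\ref{th:monotonicity} and Lemma~\ref{th:subadditivity}, applied to the data of \eqref{eq:conicpg}. For every coalition the objective vector is the same, $c_S=p$; so we take $B=\supp(p)$ and $c=p|_B$, and the objective hypothesis of both lemmas holds trivially. The cone $\K_S=\K$ and the operator $\A_S=\A$ do not depend on $S$; only the right-hand side $b_S=b(S)$ changes. What remains is to construct the feasible point $w$ demanded by each lemma.

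\textbf{Superadditivity.} Let $S\cap T=\emptyset$, and take $x_S\in L(S)\cap\K$ and $x_T\in L(T)\cap\K$. Set $w:=x_S+x_T$. Because $\K$ is a convex cone, $w\in\K$. By linearity of $\A$,
\[
\A w=b(S)+b(T)=b(S\cup T),
\]
where the last equality uses additivity of $b(\cdot)$ over disjoint coalitions. Also $w|_B=x_S|_B+x_T|_B$. Lemma~\ref{th:subadditivity} then gives $\nu(S\cup T)\ge\nu(S)+\nu(T)$.

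\textbf{Monotonicity.} Let $S\subseteq T$ and take $x\in L(S)\cap\K$. We need $w\in L(T)\cap\K$ with $\langle p,w\rangle\ge\langle p,x\rangle$. (This inequality is all the proof of Lemma~\ref{th:monotonicity} really uses; equality on $B$ is more than needed.) The natural choice is $w=x+x'$ with $x'$ feasible for the resource vector $b(T\setminus S)$ and $\langle p,x'\rangle\ge 0$. Equivalently, the result follows from superadditivity together with $\nu(R)\ge0$ for every coalition $R$: then $\nu(T)\ge\nu(S)+\nu(T\setminus S)\ge\nu(S)$.

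\textbf{Main obstacle: $\nu(R)\ge 0$.} This is the step I expect to be hard, and it relies on the paper's ``mild assumptions''. In linear production games it holds because $b\ge0$ and the technology is nonnegative, so $x=0$ is feasible when there are no resources and producing less is always possible. In the general conic setting $\A x=b(R)$ need not admit a solution with nonnegative value. I would first try to impose the assumption in its standard form: each problem \eqref{eq:conicpg} is feasible, and $\A$ allows free disposal, i.e.\ the equality models $\A x\le b$ through a slack component of $\K$. In that setting the endowments lie in the cone and $p\in\K^*$ on the production components, so every feasible $x'$ has $\langle p,x'\rangle\ge0$. I would state this assumption explicitly. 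The suprema might not be attained, in which case I would run the same arguments with $\varepsilon$-optimal solutions; the feasibility constructions above are unaffected by this.
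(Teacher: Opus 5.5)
\textbf{Assessment.} Your proposal is correct once you add the assumption you flag, and it is more careful than the paper's proof. The paper proves the corollary in one line by citing Lemma~\ref{th:monotonicity} and Lemma~\ref{th:subadditivity}, without checking their hypotheses.

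\textbf{Superadditivity.} Your argument supplies exactly the check the paper leaves implicit. The point $w=x_S+x_T$ lies in $\K$ and satisfies $\A w=b(S\cup T)$, and this needs no further assumptions.

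\textbf{Monotonicity.} You are right not to trust the bare citation of Lemma~\ref{th:monotonicity}. With equality constraints its hypothesis can fail, and so can the conclusion. With $\K=\R_+$, $\A x=x$, $b(i)=1$ and $p=-1$, one gets $\nu(S)=-|S|$, even though the grand coalition satisfies Slater. With nonnegative prices, taking $p=1$, $b(1)=2$, $b(2)=-1$ gives $\nu(\{1\})=2>1=\nu(\{1,2\})$. So an extra hypothesis is genuinely needed. Stating one explicitly, as you do, repairs the paper's statement rather than weakening your proof.

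\textbf{Sharpening the fix.} Your assumption is stronger than necessary.
\begin{enumerate}
\item Your route $\nu(T)\ge\nu(S)+\nu(T\setminus S)\ge\nu(S)$ needs only two things: every coalition's problem is feasible, and $p\in\K^*$. Then $\ip{p}{x'}\ge 0$ for every $x'\in\K$. Free disposal is just one way to secure feasibility.
\item Alternatively, suppose free disposal holds with all endowments in the slack cone and $p$ vanishing on the slack coordinates. Then Lemma~\ref{th:monotonicity} applies verbatim, with no sign condition on prices. If $(x_{\mathrm{prod}},s)$ is feasible for $S$, then $(x_{\mathrm{prod}},s+b(T\setminus S))$ is feasible for $T$ and agrees with it on $B=\supp(p)$.
\end{enumerate}
Your remarks that Lemma~\ref{th:monotonicity} only needs $\ip{c_T}{w}\ge\ip{c_S}{x}$, and that $\varepsilon$-optimal solutions handle non-attainment, are also correct.
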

\begin{proof}
The result follows from applying Lemmas~\ref{th:monotonicity} and \ref{th:subadditivity}.
\end{proof}

\begin{theorem}\label{tg:production}
Let $(N,\nu)$ be the conic production game defined by $(N,p,\A,b,\K)$. If the conic program~\eqref{eq:conicpg} for the grand coalition $N$ verifies the Slater condition, then the core of $(N,\nu)$ is nonempty.
\end{theorem}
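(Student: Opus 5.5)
This follows Owen's argument for linear production games: build a core allocation from an optimal dual price vector of the grand coalition. Assume the Slater condition for \eqref{eq:conicpg} at $N$: there is $x^\circ\in\K^\circ$ with $\A x^\circ=b(N)$. By the strong duality results of Section~\ref{sec:prelims}, there is then no duality gap between \eqref{eq:conicpg-primal} and \eqref{eq:conicpg-dual} for $S=N$. When the primal problem is strictly feasible and its value is finite, the dual optimum is attained. So there exists $y^\star$ with $\A^*y^\star-p\in\K^*$ and $\ip{y^\star}{b(N)}=\nu(N)$. Implicitly I assume $\nu(N)<\infty$, since otherwise the core question is vacuous. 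Attainment of the dual optimum under primal Slater is the standard conic duality fact I will invoke; the paper's item 2 gives only zero gap, so I will cite attainment as part of the standard strong duality theorem.

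Next I define the Owen-type allocation $z_i:=\ip{y^\star}{b(i)}$ for $i\in N$. Efficiency is immediate from linearity of the pairing: $\sum_{i\in N}z_i=\ip{y^\star}{b(N)}=\nu(N)$.

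For coalitional rationality, fix $S\subseteq N$. The dual feasible set $\{y:\A^*y-p\in\K^*\}$ does not depend on $S$, so $y^\star$ is feasible for \eqref{eq:conicpg-dual} for every coalition. Weak duality then gives $\nu(S)\le\ip{y^\star}{b(S)}=\sum_{i\in S}z_i$. Concretely, for any feasible $x$ we have $\ip{y^\star}{b(S)}-\ip{p}{x}=\ip{\A^*y^\star-p}{x}\ge0$. If the program for $S$ is infeasible, $\nu(S)=-\infty$ and the inequality holds trivially. Hence $z$ lies in the core.

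The only delicate step is the first one: guaranteeing existence of an optimal dual solution rather than merely equality of values. If I could not rely on attainment, I would fall back on an $\varepsilon$-optimal $y_\varepsilon$ and use compactness. Because $b(i)$ are fixed vectors, the allocations $z^\varepsilon$ satisfy $\sum_{i\in S}z^\varepsilon_i\ge\nu(S)$ for all $S$ and $\sum_i z^\varepsilon_i\le\nu(N)+\varepsilon$. These $z^\varepsilon$ are bounded: each $z^\varepsilon_i\ge\nu(\{i\})$, assuming singleton values are finite, and their total is bounded above. A limit point is then a core element, since the core conditions are closed. This limiting argument circumvents the attainment issue entirely.
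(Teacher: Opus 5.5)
Your proof is correct and follows the same Owen-type argument as the paper. You take an optimal dual price $y^\star$ for the grand coalition and set $z_i=\langle y^\star, b(i)\rangle$; efficiency comes from the zero duality gap, and coalitional rationality comes from weak duality, because the dual feasible set does not depend on $S$. Your appeal to dual attainment under primal strict feasibility with $\nu(N)$ finite is exactly the standard fact behind the paper's one-line ``by strong duality'', since the paper's stated item 2 gives only zero gap; so the $\varepsilon$-limiting fallback, which would also need finite singleton values, is unnecessary.
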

\begin{proof}
Under the hypothesis of the theorem, by strong duality, there exists an optimal dual price vector $y_N^\star$ such that $\nu(N)=\ip{y_N^\star}{b(N)}$. Let us define the allocation $z_i=\ip{y_N^\star}{b(i)}$ for all $i\in N$. The efficiency is already provided by strong duality. Besides, for any subset $S\subset N$ the vector $y_N^\star$ is feasible for its dual program. Hence, $$\sum_{i\in S} z_i = \sum_{i\in S} \ip{y_N^\star}{b(i)} = \ip{y_N^\star}{b(S)} \geq \nu(S)$$ where the last inequality follows from weak duality. Thus, $z=(z_i)_{i\in N}$ belongs to the core of $(N,\nu)$ proving the claim.
\end{proof}

The reader can easily check that if the conic program \eqref{eq:conicpg} for a coalition $S$ verifies the Slater condition, the same proof also applies to any subgame induced by a subset of players $S$, which proves that any conic production game $(N,\nu)$ under these mild conditions is totally balanced.

\subsection{Cobb--Douglas Production Game}

The Cobb--Douglas production model, originally introduced by \cite{cobb1928theory}, is one of the most widely used models in economics to describe the relationship between production ($P$), labor ($L$), and capital ($K$). In its classical form, production is modeled as $P=\kappa L^\lambda K^{1-\lambda},$ where $\kappa>0$ is a productivity parameter, and $\lambda \in [0,1]$ is the \emph{elasticity} of labor. The elasticity parameter quantifies the responsiveness of production to changes in the input levels: a $1\%$ increase in labor results in a $\lambda\%$ increase in production, while a $1\%$ increase in capital yields a $(1-\lambda)\%$ increase in production. More generally, given $d$ goods with levels $x_j \geq 0$, $j=1,\ldots,d$, the Cobb--Douglas production function is defined by $f(x)=\prod_{j=1}^d x_j^{\lambda_j}$, where $\sum_{j=1}^d\lambda_j=1$ and $\lambda_j\geq 0$ represents the elasticity of good $j$.

In the context of cooperation, let $N$ be a set of agents, each endowed with a bundle of $m$ resources $b : N \to \mathbb{R}^m$. Let $\mathcal{A}$ be the $m$-dimensional real linear operator that maps production levels to resource consumption.

If the agents of a coalition $S\subseteq N$ cooperate, then they can pool their resources, namely $b(S):=\sum_{i\in S} b(i)$, and seek to solve the following utility-maximization problem:

\begin{equation}\tag{Cobb--Douglas} \label{pro:cobb-douglas} 
\nu(S): = \max_{x\geq 0} \left\{ \prod_{j=1}^d x_j^{\lambda_j} \; : \; \A x = b(S)\right\}.
\end{equation}

We can define the \emph{Cobb--Douglas production game} $(N,\nu)$ as the TU game whose characteristic function is given by the optimal value of \eqref{pro:cobb-douglas}.

\begin{proposition}\label{prop:cobb-douglas}
    The Cobb-Douglas production game $(N,\nu)$ is a conic production game.
\end{proposition}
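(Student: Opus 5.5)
The plan is to exhibit a 5-tuple $(N,p',\A',b',\K)$ such that, for every $S\subseteq N$, problem \eqref{eq:conicpg} has the same optimal value as \eqref{pro:cobb-douglas}. The natural tool is the power cone $\PP^d_\lambda=\{(x,t)\in\R^d_+\times\R : |t|\leq \prod_{j=1}^d x_j^{\lambda_j}\}$. For $\lambda_j\geq 0$ with $\sum_j\lambda_j=1$ it is a proper cone: it is closed and convex because the weighted geometric mean is concave and positively homogeneous, it is pointed, and it has nonempty interior, e.g.\ the point $(\mathbf 1,0)$. I would first recall these facts and note that they require $\lambda_j\ge 0$ and $\sum_j\lambda_j=1$, which is exactly the Cobb--Douglas setting.

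Next I would take the decision vector $(x,t)\in V:=\R^d\times\R$ and the price $p':=(0,\dots,0,1)$, so that $\ip{p'}{(x,t)}=t$. The linear operator is $\A'(x,t):=\A x$, i.e.\ $\A'=[\A\;\,0]$, with $b'=b$ and $\K=\PP^d_\lambda$. The resulting conic production game reads
\[
\nu'(S)=\max\{t : \A x=b(S),\ (x,t)\in\PP^d_\lambda\}.
\]

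The third step is to show $\nu'(S)=\nu(S)$ for every $S$ by a two-sided comparison of feasible points.
\begin{itemize}
\item If $x\ge 0$ is feasible for \eqref{pro:cobb-douglas}, then $(x,\prod_j x_j^{\lambda_j})\in\PP^d_\lambda$ is feasible for the conic problem with the same value, so $\nu'(S)\ge\nu(S)$.
\item Conversely, any feasible $(x,t)$ has $x\ge0$, $\A x=b(S)$ and $t\le|t|\le\prod_j x_j^{\lambda_j}$, so $x$ is feasible for \eqref{pro:cobb-douglas} with value at least $t$, giving $\nu'(S)\le\nu(S)$.
\end{itemize}
Hence the two characteristic functions coincide, including the conventions for infeasible coalitions ($-\infty$ on both sides). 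So the game is a conic production game, and it inherits monotonicity, superadditivity, and, under Slater, core nonemptiness via Theorem~\ref{tg:production}.

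The main obstacle is the definitional bookkeeping of properness rather than the equivalence itself. The paper's power cone allows $t$ of either sign, and the sign-free maximization of $t$ is harmless precisely because we maximize $t$. I would also check that the dual cone is $(\PP^d_\lambda)^*=\{(y,s): |s|\le\prod_j (y_j/\lambda_j)^{\lambda_j}\}$, so that (CPG-D) is well defined. One might additionally remark that the productivity constant $\kappa$ is absorbed by scaling $p'$.
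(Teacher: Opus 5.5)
Your proposal is correct and takes the same approach as the paper: the same 5-tuple $(N,(0,1),[\A\;\,0],b,\PP^d_\lambda)$ and the same power cone program (CD-PCP). Your explicit two-sided comparison of feasible points is slightly more careful than the paper's one-line appeal to the hypograph. As defined in Table~\ref{tab:cones}, $\PP^d_\lambda$ bounds $|t|$ rather than $t$, and you correctly note that this is harmless because the objective maximizes $t$.
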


\begin{proof}
    The proof follows from the fact that the hypograph of the Cobb--Douglas function is exactly the power cone $\PP^d_\lambda$. Therefore, the Cobb--Douglas production game can be represented by the following power cone program {\rm (PCP):}
 \begin{align}\label{eq:cd}\tag{CD-PCP}
    \nu(S) = \max \quad & t \nonumber \\
    \sta \quad & \A x = b(S),\nonumber\\
    & (x,t)\in \PP^d_\lambda,\nonumber
    \end{align}
    what proves the claim.
\end{proof}

The class of Cobb--Douglas production games is a subclass of conic production games given by the $5$-tupla $(N, (0,1), (\A, 0), b, \PP^d_\lambda)$. The dual program of \eqref{eq:cd} is given by:
\begin{equation}
   \minimize \quad  \ip{y}{b(S)} \quad  \st \quad  (\A^\top y,1) \in (\PP^d_\lambda)^*,\nonumber
    \end{equation}
where the expression for the dual cone of the power cone  $(\PP^d_\lambda)^*$ is given by $$\left\{(x,t)\in \R_+^d\times\R : |t|\leq \prod_{j=1}^d \left(\frac{x_j}{\lambda_j}\right)^{\lambda_j}\right\}.$$
\begin{corollary}
    Let $(N,\nu)$ be a Cobb--Douglas production game. If the conic program~\eqref{eq:cd} for the grand coalition $N$ verifies the Slater condition, then the core of $(N,\nu)$ is nonempty.
\end{corollary}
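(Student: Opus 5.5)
The plan is to reduce the statement to Theorem~\ref{tg:production} through Proposition~\ref{prop:cobb-douglas}. That proposition writes the Cobb--Douglas production game as the conic production game given by the $5$-tuple $(N,(0,1),(\A,0),b,\PP^d_\lambda)$. Concretely, the decision vector is $(x,t)$, the price vector picks out $t$, the linear operator is $(x,t)\mapsto \A x$, and the cone is the power cone.

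First, I will check that $\PP^d_\lambda$ is a proper cone. It is closed and convex as the hypograph-type set of a concave, positively homogeneous function. It is pointed, since $(x,t)$ and $-(x,t)$ both in the cone force $x\ge 0$ and $-x\ge 0$, hence $x=0$ and then $t=0$. It has nonempty interior, since it contains for instance $(\mathbf 1,0)$ with a neighbourhood, provided all $\lambda_j>0$; if some $\lambda_j=0$, the corresponding coordinate simply enters through $x_j\ge 0$ and the argument is unchanged. So the data fit Definition~\ref{def:cp}, and \eqref{eq:cd} is an instance of \eqref{eq:conicpg}.

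Next, I apply Theorem~\ref{tg:production} directly. Under the Slater hypothesis on \eqref{eq:cd} for $N$, strong duality gives a dual optimal $y_N^\star$ with $(\A^\top y_N^\star,1)\in(\PP^d_\lambda)^*$ and $\nu(N)=\ip{y_N^\star}{b(N)}$. The allocation $z_i=\ip{y_N^\star}{b(i)}$ is then efficient. It is coalitionally rational because the dual feasible set does not depend on $S$, so weak duality gives $\sum_{i\in S}z_i=\ip{y_N^\star}{b(S)}\ge\nu(S)$.

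The only delicate point is attainment of the dual optimum, which is what strong duality item (3) provides. Here the primal is a maximization, so I will rewrite it as minimizing $-t$ to match (CP-P). If Slater alone only gives a zero gap without attainment, I will note two facts. First, the primal value is finite: it is nonnegative, and it is bounded above once the primal feasible set $\{x\ge0:\A x=b(N)\}$ is bounded. Second, the primal Slater point implies that the dual optimum is attained, by the standard conic duality theorem with roles swapped. This is exactly the reading of strong duality the paper already uses in the proof of Theorem~\ref{tg:production}, so the corollary follows verbatim.
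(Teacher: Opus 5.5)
Your proposal is correct and follows the paper's proof. Both use Proposition~\ref{prop:cobb-douglas} to view the game as the conic production game $(N,(0,1),(\A,0),b,\PP^d_\lambda)$, then apply Theorem~\ref{tg:production} with the core allocation $z_i=\ip{y^\star}{b(i)}$ built from an optimal dual price. Your extra checks are sound refinements of details the paper leaves implicit: you verify that $\PP^d_\lambda$ is proper, and you obtain dual attainment from primal strict feasibility plus finiteness of $\nu(N)$ (finiteness is implicit because $\nu$ is real-valued), rather than from item (3), which requires strict feasibility on both sides.
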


\begin{proof}
    By means of Proposition~\ref{prop:cobb-douglas}, the claim follows from Theorem \ref{tg:production} applied to the Cobb--Douglas production game. Given an optimal solution $y^\star$ of the dual of \eqref{eq:cd} for the grand coalition $N$. The allocation defined by $z_i=\ip{y^\star}{b(i)}$ for $i\in N$ belongs to the core.
\end{proof}

\section{Cooperation in Portfolio Selection\label{s:psg}}

This section is devoted to examining cooperative behavior within the classical problem of portfolio optimization. We demonstrate that the resulting cooperative framework constitutes yet another subclass of cooperative conic games. Let $N$ be a set of agents. Each agent $i \in N$ owns a capital $c_i$ that they are willing to invest in a collection of assets $j=1,\ldots,m$, where asset $j$ has expected return $\mu_j$. Let $x \in \mathbb{R}_+^m$ denote the vector of portfolio allocations. Each agent seeks to maximize the expected return on a portfolio selected from the $m$ available assets. Furthermore, we assume that all agents agree on a common risk measure, represented by the function $\mathcal{R}(x)$, which depends on the portfolio selection.

Under this framework, a coalition $S \subseteq N$, with total capital $c(S)=\sum_{i \in S} c_i$, seeks to solve the following return-maximization problem subject to a risk constraint:
\begin{align}
\maximize \quad & \ip{\mu}{x}  \tag{Portfolio} \nonumber \\
\st \quad & \sum_{j=1}^m x_j =c(S), \nonumber\\
& \RR(x)\le R(S), \nonumber \\
& x\ge 0. \nonumber
\end{align}
Here, $R(S)\geq 0$ denotes a common admissible risk threshold for the coalition $S$.

As we shall see, this general framework gives rise to different portfolio optimization models depending on the choice of the risk measure. In the following, we present three representative cases, namely the Markowitz portfolio model, the conditional value-at-risk model, and the entropic value-at-risk model.

\subsection{Markowitz Portfolio Game}

One of the most influential models in finance and modern operations research is the classical Markowitz portfolio model \citep{Markowitz52} and its many variants.

In this setting, we consider the general framework described above with the quadratic risk measure $\mathcal{R}^2(x)=x^\top Qx$, where $Q \in \mathbb{R}^{m \times m}$ is a positive definite covariance matrix. Under cooperation, the agents in a coalition $S$ pool their capital, that is, $c(S)=\sum_{i \in S} c_i$, and agree to assume a common level of risk bounded by the threshold $R^2(S)$. Under these assumptions, the decision problem faced by coalition $S$ gives rise to the following characteristic function:
\begin{align}\label{eq:markowitz}\tag{Markowitz}
\nu(S) := \max \quad & \ip{\mu}{x} \\
\sta \quad & \sum_{j=1}^m x_j =c(S), \nonumber\\
& x^\top  Q x \le R^2(S), \nonumber \\
& x\ge 0. \nonumber
\end{align}

Although this model has been extensively studied, our focus is on the cooperation among agents within this framework. We show that, after a suitable reformulation, the resulting \emph{Markowitz portfolio game} $(N,\nu)$ can be cast as a cooperative conic game.

\begin{lemma}
    The Markowitz portfolio game $(N,\nu)$ is a TU conic game. Furthermore, the characteristic function can be represented by the following second-order cone program {\rm (SOCP):}
 \begin{align}\label{eq:socp}\tag{M-SOCP}
     \nu(S) =  \max \quad & \ip{\mu}{x}\nonumber \\
    \sta \quad & \begin{bmatrix}
        0 & 0 & e^\top  \\
        -I & 0 & L  \\
        0 & 1 & 0 
    \end{bmatrix}\begin{bmatrix}
        y \\
        s \\
        x \\
    \end{bmatrix} = \begin{bmatrix}
        c(S)\\
        0 \\
        R(S)
    \end{bmatrix},\nonumber\\
    &  (y,s; x) \in \LL^m \times \R^{m}_+,\nonumber
\end{align}
where $Q=L^\top L$ is the Cholesky decomposition of $Q$ and $e$ is a suitable vector of ones.
\end{lemma}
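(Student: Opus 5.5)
The plan is to show that the feasible set of \eqref{eq:markowitz} and that of \eqref{eq:socp} coincide after projecting out the auxiliary variables $(y,s)$, with the same objective $\ip{\mu}{x}$. Then $\nu(S)$ is the optimal value of a conic program of the form \eqref{eq:ccg}, with parameters $\K_S=\LL^m\times\R^m_+$ (a product of proper cones, hence proper), the block operator $\A_S$ displayed in the statement, $b_S=(c(S),0,R(S))$, and $c_S=(0,0,\mu)$. This is exactly Definition~\ref{def:ccg}.

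First I unpack the linear constraints row by row. The first block gives $e^\top x=c(S)$, which is the budget constraint. The second gives $y=Lx$. The third gives $s=R(S)$. The cone membership $(y,s)\in\LL^m$ means $\|y\|_2\le s$, and $x\in\R^m_+$ is the nonnegativity constraint.

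Next I use the Cholesky factorization $Q=L^\top L$, which exists because $Q$ is positive definite. It gives $x^\top Qx=\|Lx\|_2^2$. Since $R(S)\ge 0$ and norms are nonnegative, $x^\top Qx\le R^2(S)$ holds if and only if $\|Lx\|_2\le R(S)$, that is, $(Lx,R(S))\in\LL^m$.

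With this equivalence I check both directions:
\begin{itemize}
\item If $x$ is feasible for \eqref{eq:markowitz}, then $(y,s,x)=(Lx,R(S),x)$ is feasible for \eqref{eq:socp}.
\item If $(y,s,x)$ is feasible for \eqref{eq:socp}, the equality rows force $y=Lx$ and $s=R(S)$, so $x$ is feasible for \eqref{eq:markowitz}.
\end{itemize}
The objectives agree because $c_S$ only touches $x$. Hence the two optimal values coincide, and this holds for every coalition $S$.

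The main point needing care is the sign and ordering conventions. The row $(-I,0,L)$ with right-hand side $0$ encodes $y=Lx$, and the Lorentz cone as defined in the paper places the scalar $t$ last, so the ordering $(y,s)$ matches $\LL^m=\{(x,t):t\ge\|x\|_2\}$. I would also remark that if $c(S)>0$ and the risk bound is not tight at some strictly positive budget-feasible $x$, the Slater condition holds. That is not needed for this lemma, but it is useful later for core results.
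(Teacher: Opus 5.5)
Your proposal is correct and follows the paper's proof: you use the Cholesky factorization $Q=L^\top L$, introduce $y=Lx$ and $s=R(S)$, and identify $\|y\|_2\le s$ (with $s\ge 0$) with $(y,s)\in\LL^m$. You are somewhat more explicit than the paper in checking both directions of the feasibility correspondence, and in noting that $R(S)\ge 0$ is what makes $x^\top Qx\le R^2(S)$ equivalent to $\|Lx\|_2\le R(S)$.
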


\begin{proof}
    The covariance matrix $Q$ admits a Cholesky factorization $Q=L^\top L$ where $L$ is a triangular matrix. Thus, $x^\top Qx= y^\top  y$ with $y=Lx$ and problem \eqref{eq:markowitz} becomes
\begin{align*}
\maximize \quad & \ip{\mu}{x}   \\
\st \quad & \sum_{j=1}^m x_j =c(S), \\
& Lx-y=0, \\
& s = R(S), \\
& \sum_{j=1}^m y_j^2 \le s^2, \\
& s,x\ge 0, y \in \mathbb{R}^m.\nonumber
\end{align*}

The claim follows then from the observation that $\sum_{j=1}^m y_j^2\leq s^2$ together with $s\geq 0$ is equivalent to $(y,s)\in \LL^m$, and rewriting the program in matrix form.
\end{proof}

The dual program of \eqref{eq:socp} for coalition $S$ is:
\begin{align*}
    \minimize \quad  &\; c(S)u + R(S)w\\
    \st \quad & L^\top v + ue \geq \mu, \\
    &\; (v,w) \in \LL_m.
\end{align*}

In general, the game $(N,\nu)$ is not balanced or superadditive unless some conditions are required on the risk threshold mapping $R(\cdot)$. In the following, we will assume a sufficient condition on $R$ that ensures both superadditivity and balancedness.

\begin{assumption}\label{ass}
It is natural to assume that the larger the coalition, the greater the average risk that the agents are willing to accept. Formally, if $S,T\subseteq N$ coalitions such that $|S|\leq |T|$ then
\begin{equation*}
\frac{R(T)}{|T|} \ge \frac{R(S)}{|S|}.
\end{equation*}
\end{assumption}

\begin{proposition}
Under Assumption \ref{ass}, the Markowitz portfolio game $(N,\nu)$ is superadditive.
\end{proposition}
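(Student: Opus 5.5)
The plan is to verify the hypotheses of Lemma~\ref{th:subadditivity} on the formulation \eqref{eq:socp}, or equivalently to run its argument directly. The objective coefficient is $\mu$ on the $x$-block and zero on the $(y,s)$-block for every coalition, so $\supp(c_S)=B$ is the $x$-coordinates and $c_S|_B=\mu$ independently of $S$. It therefore suffices to show the following. Let $S,T$ be disjoint, with feasible points $(y_S,s_S;x_S)$ and $(y_T,s_T;x_T)$ of \eqref{eq:socp} for $S$ and $T$. Then there is a feasible point $w$ for $S\cup T$ whose $x$-block is $x_S+x_T$. Applying this to optimal solutions gives $\nu(S\cup T)\ge \ip{\mu}{x_S+x_T}=\nu(S)+\nu(T)$.

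First I treat the degenerate case. If $S$ or $T$ is empty, then $c(\emptyset)=0$ forces $x=0$, so $\nu(\emptyset)=0$ and the inequality is trivial. We may therefore assume $|S|,|T|\ge 1$. Next I build the candidate point. Set $x:=x_S+x_T\ge 0$, $y:=Lx=y_S+y_T$ and $s:=R(S\cup T)$. The budget constraint holds by linearity, since $e^\top x=c(S)+c(T)=c(S\cup T)$ because $S\cap T=\emptyset$. The constraints $Lx-y=0$ and $s=R(S\cup T)$ hold by construction, and $x\in\R^m_+$ is clear.

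The only real work, and the step I expect to be the main obstacle, is the cone membership $(y,s)\in\LL^m$, that is, $\|Lx\|_2\le R(S\cup T)$. The triangle inequality gives
\begin{equation*}
\|L(x_S+x_T)\|_2\le \|Lx_S\|_2+\|Lx_T\|_2\le R(S)+R(T).
\end{equation*}
So I need $R(S)+R(T)\le R(S\cup T)$, which is where Assumption~\ref{ass} enters. Since $|S|\le|S\cup T|$ and $|T|\le|S\cup T|$, the assumption yields
\begin{equation*}
R(S)\le \frac{|S|}{|S\cup T|}\,R(S\cup T), \qquad R(T)\le \frac{|T|}{|S\cup T|}\,R(S\cup T).
\end{equation*}
Adding these and using $|S|+|T|=|S\cup T|$ gives exactly $R(S)+R(T)\le R(S\cup T)$. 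Hence $w=(y,s;x)$ is feasible for $S\cup T$, and the hypotheses of Lemma~\ref{th:subadditivity} hold.

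Finally I would remark on attainment and feasibility. The feasible sets are compact, being bounded by the budget constraint and $x\ge0$, so optima are attained whenever the feasible set is nonempty. If some coalition's problem is infeasible, we adopt the convention $\nu=-\infty$, and the inequality holds trivially. With these points settled, Lemma~\ref{th:subadditivity} gives superadditivity.
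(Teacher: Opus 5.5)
Your proof is correct and follows essentially the paper's route: reduce superadditivity to $R(S)+R(T)\le R(S\cup T)$ and then invoke Lemma~\ref{th:subadditivity}. Your triangle-inequality check $\|L(x_S+x_T)\|_2\le R(S)+R(T)$ makes explicit the feasibility step the paper leaves implicit. The only difference is minor: you get $R(S)+R(T)\le R(S\cup T)$ by applying Assumption~\ref{ass} to $S$ versus $S\cup T$ and $T$ versus $S\cup T$ and summing, whereas the paper assumes $|S|\le|T|$ and uses a mediant-inequality chain; both are valid.
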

\begin{proof}
We only have to prove that $R(T)+R(S)\le R(S\cup T)$. Without loss of generality, assume that $|S|\le |T|$. Recall that for any nonnegative real numbers $a,b,c,d$, then $a/b\ge c/d$ if and only if $a/b\ge (a+c)/(b+d)\ge c/d$. Then applying this observation and the assumption, we have 
$$\frac{R(S\cup T)}{|S|+|T|} \ge \frac{R(T)}{|T|}\ge \frac{R(S)+R(T)}{|S|+|T|} \ge \frac{R(S)}{|S|}.$$
Therefore, $R(S\cup T) \ge R(S)+R(T)$. Now one has to apply Lemma \ref{th:subadditivity} to complete the proof.
\end{proof}

\begin{theorem}\label{th:markowitz}
Let $(N,\nu)$ be a Markowitz portfolio game under Assumption \ref{ass}. If the conic program \eqref{eq:socp} for the grand coalition $N$ verifies the Slater condition, then the core of $(N,\nu)$ is nonempty.
\end{theorem}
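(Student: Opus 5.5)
The plan is to imitate the proof of Theorem~\ref{tg:production}: take an optimal dual solution for the grand coalition and use it to build prices. The one change is that the risk threshold $R(\cdot)$ is not additive over players, so the risk part of the dual objective cannot be split through the endowments. I would split it equally, and Assumption~\ref{ass} is what makes an equal split work.

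\textbf{Step 1 (dual attainment and efficiency).} The Slater condition holds for \eqref{eq:socp} with $S=N$. By the strong duality results recalled in Section~\ref{sec:prelims}, there is then an optimal solution $(u^\star,v^\star,w^\star)$ of the dual program displayed after \eqref{eq:socp}, with
\[
\nu(N)=c(N)u^\star+R(N)w^\star .
\]
The key observation is that the dual feasible set $\{(u,v,w): L^\top v+ue\ge \mu,\ (v,w)\in\LL_m\}$ does not depend on the coalition. Only the objective coefficients $(c(S),R(S))$ change with $S$. Hence $(u^\star,v^\star,w^\star)$ is dual feasible for every coalition $S$. Moreover, since the Lorentz cone is self-dual and $(v^\star,w^\star)$ lies in it, we have $w^\star\ge \|v^\star\|_2\ge 0$.

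\textbf{Step 2 (the allocation).} I would define
\[
z_i:=c_i\,u^\star+\frac{R(N)}{|N|}\,w^\star,\qquad i\in N .
\]
Summing over $i\in N$ gives $c(N)u^\star+R(N)w^\star=\nu(N)$, so $z$ is efficient.

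\textbf{Step 3 (coalitional rationality).} Fix $S\subseteq N$. Since $|S|\le |N|$, Assumption~\ref{ass} gives $|S|\,R(N)/|N|\ge R(S)$. Together with $w^\star\ge 0$ from Step 1, this yields
\[
\sum_{i\in S}z_i=c(S)u^\star+|S|\frac{R(N)}{|N|}w^\star\ \ge\ c(S)u^\star+R(S)w^\star\ \ge\ \nu(S).
\]
The last inequality is weak duality for the max-type primal \eqref{eq:socp} of coalition $S$, which applies because $(u^\star,v^\star,w^\star)$ is feasible for its dual. Therefore $z$ lies in the core.

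\textbf{Main obstacle.} The delicate point is Step 1: establishing both dual attainment and the sign condition $w^\star\ge 0$. Attainment should follow from the Slater hypothesis on the primal. The sign of $w^\star$ must be read off from the cone membership $(v,w)\in\LL_m$, i.e.\ from the self-duality of the second-order cone. The sign matters because, without $w^\star\ge0$, the inequality supplied by Assumption~\ref{ass} would point the wrong way.
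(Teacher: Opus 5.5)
Your proposal is correct and matches the paper's proof essentially step for step. You use the same allocation $z_i=c_iu^\star+\frac{R(N)}{|N|}w^\star$, efficiency from strong duality, and coalitional rationality from Assumption~\ref{ass} combined with weak duality, using that the dual feasible set does not depend on $S$; your explicit remark that $w^\star\ge\|v^\star\|_2\ge 0$, which follows from the second-order cone constraint, makes precise a sign condition the paper uses only tacitly.
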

\begin{proof}
Under the hypothesis of the theorem, by strong duality, there exists an optimal dual price vector $(u_N^\star, v^\star_N, w^\star_N)$ such that $\nu(N)=\sum_{i\in N}c_iu_N^\star + R(N)w^\star_N$. Let us define the allocation $z_i=c_iu_N^\star + \frac{R(N)}{|N|}w^\star_N$ for all $i\in N$. The efficiency is already provided by strong duality. Besides, for any subset $S\subset N$ the vector $(u_N^\star, v^\star_N, w^\star_N)$ is feasible for its dual program. Hence, $$\sum_{i\in S} z_i = \sum_{i\in S} \left(c_iu_N^\star + \frac{R(N)}{|N|}w^\star_N\right) = c(S)u_N^\star + 
|S|\frac{R(N)}{|N|}w^\star_N \overset{(\small A \ref{ass})}{\geq} c(S)u_N^\star + R(S)w^\star_N \geq \nu(S)$$ where the last inequality follows from weak duality. Thus, $z=(z_i)_{i\in N}$ belongs to the core of $(N,\nu)$ proving the claim.
\end{proof}

\subsection{Conditional Value-at-Risk Portfolio Game}

Another classical portfolio optimization model considers the conditional value-at-risk (CVaR) of the portfolio return as the risk measure. To this end, suppose that $T$ different scenarios are considered. The probability of occurrence of scenario $t=1,\ldots,T$ is $p_t \geq 0$, with $\sum_{t=1}^{T} p_t = 1$. We assume that, for each asset $j=1,\ldots,m$, the realization of its random return $\rho_j$ under scenario $t$ is known and denoted by $r_{jt}$. The expected return of asset $j$ is therefore given by $\mu_j=\sum_{t=1}^T p_tr_{jt}$. The scenario-based formulation captures the dependence among the returns of the different assets. The return of a portfolio $x$ under scenario $t$ is $y_t=\sum_{j=1}^m r_{jt}x_j$. Given a confidence level $0 \leq \beta \leq 1$, the conditional value-at-risk of the portfolio return $y=(y_t)_{t=1}^{T}$ is defined as

$$
\CVaR_\beta(y) : = \frac{1}{1-\beta} \sum_{t=1}^{\lfloor (1-\beta)T\rfloor} p_{(t)}y_{(t)}
$$
where $y_{(1)}\geq y_{(2)}\geq \cdots \geq y_{(T)}$. Under cooperation, the agents in a coalition $S$ pool their capital, that is, $c(S)=\sum_{i\in S} c_i$, in order to maximize their expected return while constraining the risk measure $\mathcal{R}(y)=\CVaR_\beta(y)$ to be at most the coalitional risk threshold $R(S)$. Under these assumptions, the decision problem faced by coalition $S$ is given by the following optimization problem:

\begin{align}\label{eq:cvar}\tag{CVaR-P} 
\nu(S) := \max \quad & \ip{\mu}{x}\nonumber \\
\sta \quad & \sum_{j=1}^m x_j =c(S), \nonumber\\
& y_t=\sum_{j=1}^m r_{jt}x_j, \quad \forall t=1,\ldots, T;\nonumber\\
& \CVaR_\beta(y)\le R(S), \nonumber \\
& x\ge 0. \nonumber
\end{align}

The resulting CVaR \emph{portfolio game} $(N,\nu)$ can be cast as a cooperative conic game due to the following well-known  linear programming (LP) reformulation of \eqref{eq:cvar} \cite[see, e.g.,][]{rockafellar2000optimization, mansini2015linear}:
 \begin{align}\label{eq:cvar-lp}\tag{CVaR-LP}
\nu(S) =\max \quad & \ip{\mu}{x}  \nonumber \\
\sta \quad & \sum_{j=1}^m x_j =c(S), \nonumber\\
& y_t=\sum_{j=1}^m r_{jt}x_j, \quad \forall t=1,\ldots, T;\nonumber\\
& d^+_t \ge y_t-\eta, \nonumber \\
& \eta + \frac{1}{1-\beta}\sum_{t=1}^T p_td^+_t \le R(S), \nonumber \\
& d^+_t, x\ge 0. \nonumber
\end{align}

We say that the aggregated risk that the agents are willing to accept is \emph{additive} if $R(S)=\sum_{i\in S} R(i)$ where $R(i):=R(\{i\})$. If this equality holds for every coalition $S\subseteq N$, then we say that the risk threshold mapping $R$ is additive.

\begin{theorem}\label{th:cvar}
Let $(N,\nu)$ be a $\CVaR$ portfolio game. If the risk threshold mapping $R$ is additive or satisfies Assumption \ref{ass}, then $(N,\nu)$ is superadditive and its core is nonempty.
\end{theorem}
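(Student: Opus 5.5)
The plan mirrors the proof of Theorem~\ref{th:markowitz}, using the LP reformulation \eqref{eq:cvar-lp} in place of the SOCP. Since that reformulation is a linear program, strong LP duality holds whenever the primal for the grand coalition is feasible and bounded. No Slater condition is needed. Boundedness is automatic, because $x\ge 0$ and $\sum_j x_j=c(N)$ confine $x$ to a simplex and $\ip{\mu}{x}$ is bounded there. I will take feasibility of the grand coalition's problem as implicit in the statement, since $\nu(N)$ must be finite for the game to be defined. I will also assume feasibility for each subcoalition $S$. If some $S$ is infeasible, then $\nu(S)=-\infty$, and both the core inequality for $S$ and superadditivity are trivial for that $S$.

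For \textbf{superadditivity} I will invoke Lemma~\ref{th:subadditivity} with $B$ the coordinates of $x$. Take feasible $(x_S,y_S,d^+_S,\eta_S)$ and $(x_T,y_T,d^+_T,\eta_T)$ for disjoint $S$ and $T$, and set $w:=(x_S+x_T,\,y_S+y_T,\,d^+_S+d^+_T,\,\eta_S+\eta_T)$. The equality constraints are linear, so they add up. The constraint $d^+_t\ge y_t-\eta$ also adds. The risk constraint gives $\eta+\frac1{1-\beta}\sum_t p_t d^+_t\le R(S)+R(T)$. It remains to show $R(S)+R(T)\le R(S\cup T)$. This holds with equality when $R$ is additive. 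Under Assumption~\ref{ass} it follows from exactly the mediant argument in the superadditivity proposition for the Markowitz game. Hence $w$ is feasible for $S\cup T$, and the objective depends only on $x$.

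For the \textbf{core} I will write the dual LP of \eqref{eq:cvar-lp}. Its right-hand side is $(c(S),0,\dots,0,R(S))$, with a free multiplier $u$ on the budget equation and a multiplier $w\ge 0$ on the risk inequality. All other constraints have zero right-hand side. So the dual objective is $c(S)u+R(S)w$, and the dual feasible region does not depend on $S$.

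Let $(u^\star,w^\star,\dots)$ be optimal for $N$. Define the allocation
\[
z_i=c_iu^\star+\alpha_i w^\star, \qquad i\in N,
\]
where $\alpha_i=R(i)$ in the additive case and $\alpha_i=R(N)/|N|$ under Assumption~\ref{ass}. Efficiency follows from strong duality. For coalitional rationality, observe that $\sum_{i\in S}\alpha_i\ge R(S)$: it holds with equality in the additive case, and by the assumption applied to $|S|\le|N|$ in the other case. Because $w^\star\ge 0$, this gives $\sum_{i\in S}z_i\ge c(S)u^\star+R(S)w^\star\ge\nu(S)$, where the last step is weak duality.

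The main obstacle is the sign of $w^\star$. I must check carefully that the risk constraint enters the LP as a $\le$ constraint in a maximization problem, so that its multiplier is nonnegative. With the dual written correctly this holds, and the rest is routine.
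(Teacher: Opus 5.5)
Your proposal is correct and matches the paper's proof: the allocation $z_i=c_iu^\star+\alpha_i w^\star$ built from the grand coalition's dual prices, weak duality for coalitional rationality, and Lemma~\ref{th:subadditivity} together with $R(S\cup T)\ge R(S)+R(T)$ for superadditivity. You also check two points the paper leaves implicit: that LP strong duality needs only feasibility and boundedness, not the Slater condition the paper invokes through Theorem~\ref{tg:production}, and that $w^\star\ge 0$.
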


\begin{proof}
    Notice that in the additive case, a stable allocation is given by $z_i=c_iu^\star+R(i)w^\star$ for all $i\in N$, where $u^\star$ and $w^\star$ are optimal dual prices of \eqref{eq:cvar-lp} for the grand coalition $N$. The proof follows from the fact that $(N,\nu)$ is a conic (linear, indeed) production game and applying Theorem \ref{tg:production}. 
    
    Otherwise, if Assumption~\ref{ass} holds, then a stable allocation is given by $z_i=c_iu^\star+\frac{R(N)}{|N|}w^\star$. The proof follows the same arguments as that of Theorem~\ref{th:markowitz}.

    For the superadditivity, given $S,T\subseteq N$ such that $S\cap T = \emptyset$, observe that under the two hypotheses $R(S\cup T)\geq R(S) + R(T)$ holds. It remains only to apply Lemma \ref{th:subadditivity}.
\end{proof}

\subsection{Entropic Value-at-Risk Portfolio Game}

In the third portfolio optimization model under consideration, the risk measure is given by the entropic value-at-risk~\citep{ahmadi2019portfolio}. The reader is referred to the previous section for the details of the portfolio setting considered here. Given a confidence level $0 \leq \beta \leq 1$, the entropic value-at-risk (EVaR) of the portfolio return $y=(y_t)_{t=1}^{T}$ is defined as

\begin{equation*}
    \EVaR_\beta(y):=\min_{z\geq 0} z\ln \left(\frac{1}{1-\beta}\sum_{t=1}^T p_t\exp\left(\frac{y_t}{z}\right)\right).
\end{equation*}

The following result proves the conic representability of the entropic value-at-risk via an exponential conic lifting.

\begin{lemma}\label{lema:lifting}
    The following feasible set
    \begin{align}
        \{(y,\tau,z,v,w)\in \R^T\times \R_+\times \R_+\times \R_+^T\times \R_+ &: z\geq \sum_{t=1}^T p_tv_t \label{ctr:1}\\
        & w+z\ln\left(\frac{1}{1-\beta}\right) \leq \tau \label{ctr:2}\\
        & (y_t - w, z, v_t)\in \K_{\exp}, \quad \forall t=1,\ldots, T\}    \label{ctr:3}
        \end{align}
        is a conic lifting of $\EVaR_\beta$.
\end{lemma}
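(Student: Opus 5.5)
Since ``conic lifting'' means that the projection of the set onto the $(y,\tau)$ coordinates is the epigraph $\{(y,\tau): \EVaR_\beta(y)\le\tau\}$, I would prove the two inclusions separately. Everything hinges on the description of the exponential cone in Table~\ref{tab:cones}: for $z>0$, $(a,z,v)\in\K_{\exp}$ if and only if $v\ge z\exp(a/z)$. The closure adds the points with $z=0$, namely $a\le 0$ and $v\ge 0$.

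\emph{Projection $\subseteq$ epigraph.} Take a feasible point $(y,\tau,z,v,w)$ with $z>0$. Constraint~\eqref{ctr:3} gives $v_t\ge z\exp((y_t-w)/z)$ for every $t$. Combining this with \eqref{ctr:1} yields $z\ge z\,e^{-w/z}\sum_t p_t e^{y_t/z}$, that is,
\begin{equation*}
w\ge z\ln\Bigl(\sum_{t=1}^T p_t e^{y_t/z}\Bigr).
\end{equation*}
Adding $z\ln(1/(1-\beta))$ to both sides and using \eqref{ctr:2} gives
\begin{equation*}
\tau\ge z\ln\Bigl(\frac{1}{1-\beta}\sum_{t=1}^T p_t e^{y_t/z}\Bigr)\ge \EVaR_\beta(y).
\end{equation*}
The case $z=0$ is degenerate. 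There \eqref{ctr:3} forces $y_t\le w$ for all $t$, so $\tau\ge w\ge\max_t y_t$ by \eqref{ctr:2}. I would then verify that $\EVaR_\beta(y)\le\max_t y_t$, interpreting the value at $z=0$ as the limit $z\to 0^+$ of the objective, which equals $\max_t y_t$ because the $z\ln(1/(1-\beta))$ term vanishes.

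\emph{Epigraph $\subseteq$ projection.} Given $(y,\tau)$ with $\EVaR_\beta(y)\le\tau$, I would take $z$ to be a minimizer in the definition of $\EVaR_\beta$, or, if the infimum is not attained, a minimizing sequence or the limiting value $z=0$. Then I set
\begin{equation*}
w:=z\ln\Bigl(\sum_{t=1}^T p_t e^{y_t/z}\Bigr),\qquad v_t:=z\exp\bigl((y_t-w)/z\bigr).
\end{equation*}
With these choices \eqref{ctr:3} holds with equality, \eqref{ctr:1} holds with equality by the definition of $w$, and \eqref{ctr:2} reduces exactly to the objective value being at most $\tau$. For $z=0$ I would use $w=\max_t y_t$ and $v=0$.

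\emph{Main obstacle.} The hard part is the boundary behaviour: whether the minimum over $z\ge0$ is attained, and what happens at $z=0$. I would first show that the objective, as a function of $z>0$, is a perspective function and hence convex, and that it tends to $\max_t y_t$ as $z\to0^+$. This identifies the $z=0$ branch of the closure of $\K_{\exp}$ with the limiting value and shows that the $z=0$ points neither add nor lose elements of the epigraph. The remaining concern is the sign restrictions: the lifting requires $\tau\ge0$ and $w\ge0$. I would treat these either as harmless normalizations for nonnegative returns, or note that $w$ can be relaxed to a free variable without affecting the argument.
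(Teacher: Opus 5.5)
Your proposal is correct and follows the paper's proof: you use the same explicit point $w=z\ln(\sum_t p_te^{y_t/z})$, $v_t=z\exp((y_t-w)/z)$ for epigraph $\subseteq$ projection, and the same chain $z\ge\sum_tp_tv_t\ge z e^{-w/z}\sum_tp_te^{y_t/z}$ for the converse. Your $z=0$ branch of $\K_{\exp}$ and your remark on the signs of $w$ and $\tau$ fix three things the paper glosses over: it divides by $z$, it presumes some $z>0$ with objective at most $\tau$, and it calls its possibly negative $w$ nonnegative. One step still needs tightening: drop the ``minimizing sequence'' option, which gives no exact feasible point, and observe instead that for $\beta\in(0,1)$ Jensen bounds the objective below by $\sum_tp_ty_t+z\ln(1/(1-\beta))\to\infty$, so the infimum is either attained at some $z>0$ or is the $z\to0^+$ limit covered by your $z=0$ point (at $\beta=0$ the infimum is approached only as $z\to\infty$, and the lifting genuinely fails on the boundary).
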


\begin{proof}
Let $(y,\tau)\in \R^T\times \R_+$ be in the epigraph of $\EVaR_\beta$. It means
    \begin{equation*}
        \EVaR_\beta(y)= \min_{z\geq 0} z\ln \left(\frac{1}{1-\beta}\sum_{t=1}^T p_t\exp\left(\frac{y_t}{z}\right)\right)\leq \tau.
    \end{equation*}
    Thus, there exists $z(y,\tau)$ such that $z(y,\tau)\ln \left(\frac{1}{1-\beta}\sum_{t=1}^T p_t\exp\left(\frac{y_t}{z(y,\tau)}\right)\right)\leq \tau$. Then, taking $w(y,\tau)=z(y,\tau)\ln\left(\sum_{t=1}^T p_t\exp\left(\frac{y_t}{z(y,\tau)}\right)\right)$, it is easy to check that the vector
    $$
    \left(y,\tau,z(y,\tau),z(y,\tau)\exp\left(\frac{y_t-w(y,\tau)}{z(y,\tau)}\right)_{t=1}^T,w(y,\tau)\right)\in \R^T\times \R_+\times \R_+\times \R_+^T \times \R_+
    $$
    belongs to the feasibility set \eqref{ctr:1}-\eqref{ctr:3}.

    Conversely, let $(y,\tau,z,v,w)\in \R^T\times \R_+\times \R_+\times \R_+^T \times \R_+$ be in the feasibility set; then we have the following chain of implications:

    \begin{align}
        z\overset{\eqref{ctr:1}}{\geq} \sum_{t=1}^T p_tv_t \overset{\eqref{ctr:3}}{\geq} & \sum_{t=1}^Tp_tz\exp\left(\frac{y_j-w}{z}\right)\nonumber\\
        1 \geq & \sum_{t=1}^Tp_t\exp\left(\frac{y_j-w}{z}\right)\nonumber\\
        0\geq & \ln\left(\sum_{t=1}^T p_t\frac{\exp\left(\frac{y_t}{z}\right)}{\exp\left(\frac{w}{z}\right)}\right)\nonumber\\
        0\geq & \ln\left(\sum_{t=1}^T p_t\exp\left(\frac{y_t}{z}\right)\right) - \ln\left(\exp\left(\frac{w}{z}\right)\right)\nonumber\\
        \frac{w}{z} \geq & \ln\left(\sum_{t=1}^T p_t\exp\left(\frac{y_t}{z}\right)\right)\nonumber\\
        w \geq& z\ln\left(\sum_{t=1}^T p_t\exp\left(\frac{y_t}{z}\right)\right).\label{ctr:4}
        \end{align}
Thus, it implies
        \begin{align*}
            z\ln \left(\frac{1}{1-\beta}\sum_{t=1}^Tp_t\exp\left(\frac{y_t}{z}\right)\right)&=z\ln \left(\sum_{t=1}^Tp_t\exp\left(\frac{y_t}{z}\right)\right) + z\ln\left(\frac{1}{1-\beta}\right)\\
            &\overset{\eqref{ctr:4}}{\leq} w + z\ln\left(\frac{1}{1-\beta}\right) \overset{\eqref{ctr:2}}{\leq} \tau.
            \end{align*}
Hence, 
            $$
            \EVaR_\beta(y)= \min_{z\geq 0} z\ln \left(\frac{1}{1-\beta}\sum_{t=1}^Tp_t\exp\left(\frac{y_t}{z}\right)\right)\leq \tau,
            $$
            and then $(y,\tau)$ belongs to the epigraph of $\EVaR_\beta$ as it was claimed.
\end{proof}

As in the CVaR portfolio game, under cooperation, the agents in a coalition $S$ pool their capital, that is, $c(S)=\sum_{i\in S} c_i$, in order to maximize their expected return while constraining the risk measure $\mathcal{R}(y)=\EVaR_\beta(y)$ to be at most the coalitional risk threshold $R(S)$. Under these assumptions, the decision problem faced by coalition $S$ is given by the following optimization problem:
\begin{align}
\nu(S) := \max \quad & \ip{\mu}{x}  \tag{EVaR-P} \nonumber \\
\sta \quad & \sum_{j=1}^m x_j =c(S), \nonumber\\
& y_t=\sum_{j=1}^m r_{jt}x_j, \quad \forall t=1,\ldots, T;\nonumber\\
& \EVaR_\beta(y)\le R(S), \nonumber \\
& x\ge 0. \nonumber
\end{align}

By means of the conic lifting, the resulting EVaR \emph{portfolio game} $(N,\nu)$ can be cast as a cooperative conic game.

\begin{corollary}
    The $\EVaR$ portfolio game $(N,\nu)$ is a TU conic game. Furthermore, the characteristic function can be represented by the following exponential cone program {\rm (ECP):}

\begin{align}\label{eq:evar-ecp}\tag{EVaR-ECP}
\nu(S) = \max \quad & \ip{\mu}{x}  \nonumber \\
\sta \quad & \sum_{j=1}^m x_j =c(S), \nonumber\\
& y_t=\sum_{j=1}^m r_{jt}x_j, \quad \forall t=1,\ldots, T;\nonumber\\
& w + z\ln\left(\frac{1}{1-\beta}\right)\leq R(S), \nonumber \\
& z\geq \sum_{t=1}^Tp_tu_t, \nonumber\\
& x\ge 0, \nonumber\\
& (y_t-w, z, u_t)\in \K_{\exp}, \quad \forall t=1,\ldots, T.\nonumber
\end{align}
\end{corollary}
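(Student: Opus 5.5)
The plan is to derive the corollary directly from Lemma~\ref{lema:lifting}. The argument has two parts: first, substituting the lifting into \eqref{eq:evar-ecp} preserves the optimal value; second, the resulting program is a conic program of the form \eqref{eq:ccg}.

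\textbf{Step 1: equivalence of the two programs.} Fix a coalition $S$.

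If $x$ is feasible for (EVaR-P), set $y_t=\sum_j r_{jt}x_j$. Then $(y,R(S))$ lies in the epigraph of $\EVaR_\beta$. By the first half of the proof of Lemma~\ref{lema:lifting}, there exist $z\ge 0$, $w$ and $u\in\R^T_+$ (the vector called $v$ in the lemma) satisfying \eqref{ctr:1}--\eqref{ctr:3} with $\tau=R(S)$. Thus $(x,y,z,u,w)$ is feasible for \eqref{eq:evar-ecp} and has the same objective $\ip{\mu}{x}$.

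Conversely, if $(x,y,z,u,w)$ is feasible for \eqref{eq:evar-ecp}, the chain of inequalities in the second half of that proof gives $\EVaR_\beta(y)\le R(S)$. Hence $x$ is feasible for (EVaR-P).

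Since the objective depends only on $x$, the two optimal values coincide.

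\textbf{Technical point: where the minimum is attained.} The definition of $\EVaR_\beta$ is a minimum over $z\ge0$. When the infimum is approached only as $z\to 0^+$, the value is $\max_t y_t$. In that case I would rely on the closure in the definition of $\K_{\exp}$: the points $(y_t-w,0,u_t)$ with $y_t\le w$ and $u_t\ge0$ belong to the cone, so the lifting remains exact.

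A second gap is that the lemma places $w\in\R_+$, whereas $w$ may need to be negative. I would simply treat $w$ as a free variable in \eqref{eq:evar-ecp}, which is harmless and is what the displayed program does.

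\textbf{Step 2: casting into standard form \eqref{eq:ccg}.} This is the main bookkeeping step.

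First, turn the two inequalities into equalities with slacks $s_1,s_2\ge0$:
\[
w+z\ln\tfrac{1}{1-\beta}+s_1=R(S),\qquad z-\sum_t p_tu_t-s_2=0 .
\]
Second, split the free variables $y_t$ and $w$ into differences of nonnegative parts, or keep them free by introducing auxiliary copies $(a_t,z_t,u_t')\in\K_{\exp}$. The copies are linked by the linear constraints $a_t=y_t-w$, $z_t=z$ and $u'_t=u_t$.

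The decision vector then ranges over a product cone
\[
\K_S=\R^m_+\times\R^{k}_+\times\K_{\exp}^T,
\]
which is proper because it is a finite product of proper cones. All remaining constraints are linear equalities $\A_S\xi=b_S$, where $\A_S$ does not depend on $S$ and
\[
b_S=(c(S),0,\dots,0,R(S),0,\dots).
\]
The objective is $c_S=(\mu,0,\dots)$.

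This exhibits parameters $(c_S,b_S,\A_S,\K_S)$ as required by Definition~\ref{def:ccg}. Therefore $(N,\nu)$ is a TU conic game, and its characteristic function is given by \eqref{eq:evar-ecp}.
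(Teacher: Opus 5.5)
Your proposal is correct and follows the same route as the paper, whose proof just says the corollary ``follows directly from Lemma~\ref{lema:lifting}''. Your two-direction feasibility argument and the standard-form bookkeeping are exactly what that one line leaves implicit. Your technical remarks also fix two imprecisions in the lemma itself: it implicitly assumes the minimum over $z$ is attained at some $z>0$, and it restricts $w\ge 0$. Using the closure of $\K_{\exp}$ at $z=0$ and leaving $w$ free repairs both. For $\beta\in(0,1)$, $z=0$ is the only place attainment can fail, since the objective in $z$ grows without bound as $z\to\infty$.
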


\begin{proof}
    The result follows directly from Lemma~\ref{lema:lifting}.
\end{proof}

\begin{theorem}
Let $(N,\nu)$ be an $\EVaR$ portfolio game. If the risk threshold mapping $R$ is additive or satisfies Assumption \ref{ass}, then $(N,\nu)$ is superadditive. Moreover, if the conic program \eqref{eq:evar-ecp} for the grand coalition $N$ satisfies the Slater condition, then the core of $(N,\nu)$ is nonempty.
\end{theorem}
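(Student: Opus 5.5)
Superadditivity and core nonemptiness are handled separately, following the CVaR case (Theorem~\ref{th:cvar}) and the Markowitz case (Theorem~\ref{th:markowitz}).

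\textbf{Superadditivity.} Under either hypothesis we have $R(S\cup T)\ge R(S)+R(T)$ for disjoint $S,T$. In the additive case this holds with equality. Under Assumption~\ref{ass} it follows from the mediant argument already used for the Markowitz game. We then apply Lemma~\ref{th:subadditivity} to \eqref{eq:evar-ecp}, with $B$ the coordinates of $x$. Take feasible points $(x_S,y_S,w_S,z_S,u_S)$ and $(x_T,y_T,w_T,z_T,u_T)$ and add them componentwise. The result is feasible for $S\cup T$ for three reasons:
\begin{itemize}
\item The linear equalities are additive, since $c(S\cup T)=c(S)+c(T)$.
\item The inequality $z\ge \sum_t p_tu_t$ is preserved under addition, and so is $w+z\ln(1/(1-\beta))\le R(S)+R(T)\le R(S\cup T)$.
\item Each $(y_t-w,z,u_t)$ stays in $\K_{\exp}$ because $\K_{\exp}$ is a convex cone, hence closed under addition.
\end{itemize}
The objective $\ip{\mu}{x}$ depends only on $x$ and is additive, so the hypothesis of Lemma~\ref{th:subadditivity} is met.

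\textbf{Core nonemptiness.} Write \eqref{eq:evar-ecp} in standard form, adding slack variables for the two linear inequalities, and take its conic dual. Denote by $u$ the multiplier of the capital constraint and by $w'\ge 0$ the multiplier of the risk constraint $w+z\ln(1/(1-\beta))\le R(S)$. The key structural fact is that the right-hand side depends on $S$ only through $c(S)$ and $R(S)$, with all other constraints homogeneous. Hence the dual feasible region is the same for every coalition, and the dual objective is $c(S)u+R(S)w'$. By the Slater hypothesis and strong duality (item 2 of the strong duality statements, together with attainment), there are optimal dual prices $(u^\star,w'^\star)$ with $\nu(N)=c(N)u^\star+R(N)w'^\star$.

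We then define the allocation as follows:
\begin{itemize}
\item In the additive case, $z_i=c_iu^\star+R(i)w'^\star$. This is exactly the conic production game argument of Theorem~\ref{tg:production}.
\item Under Assumption~\ref{ass}, $z_i=c_iu^\star+\frac{R(N)}{|N|}w'^\star$.
\end{itemize}
Efficiency is immediate from strong duality. For coalitional rationality, the grand-coalition dual optimum is feasible for every coalition's dual. Using $w'^\star\ge 0$ and $|S|R(N)/|N|\ge R(S)$, which is Assumption~\ref{ass} with $T=N$, we get $\sum_{i\in S}z_i\ge c(S)u^\star+R(S)w'^\star\ge\nu(S)$ by weak duality.

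The main obstacle is the dual bookkeeping. We must confirm that $w'^\star\ge 0$, which holds because the risk constraint is a $\le$-inequality in a maximization problem, whose multiplier lies in the dual cone $\R_+$. We must also confirm that dual attainment holds under the Slater condition in this exponential-cone setting. I would check the sign by writing the slack variable explicitly in $\R_+$, and attainment follows from the standard conic strong duality stated earlier, primal Slater implying an attained dual optimum. Everything else is routine.
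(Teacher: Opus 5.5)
Your proof is correct and follows the paper's route. The paper simply defers to the CVaR argument: superadditivity via $R(S\cup T)\ge R(S)+R(T)$ and Lemma~\ref{th:subadditivity}, and core allocations $z_i=c_iu^\star+R(i)w^\star$ or $z_i=c_iu^\star+\frac{R(N)}{|N|}w^\star$ built from the grand-coalition dual prices of \eqref{eq:evar-ecp}. You fill in details it leaves implicit, namely closure of $\K_{\exp}$ under addition and the sign of the risk multiplier.

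One small precision concerns dual attainment. Item 2 of the paper's strong duality list only asserts zero gap, so it does not give you attainment. What gives it is the standard fact that a strictly feasible primal with finite optimal value has a solvable dual. Finiteness is automatic here, because the objective depends only on $x$, which ranges over the compact set $\{x\ge 0:\sum_j x_j=c(N)\}$.
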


\begin{proof}
    The proof follows the same arguments as that of Theorem~\ref{th:cvar}, but using \eqref{eq:evar-ecp} and its dual prices.
\end{proof}

\section{Mixed-Binary Quadratic Optimization Games\label{s:mbncq}}

In this section, we analyze a broad class of cooperative games that encompasses a wide range of combinatorial optimization games.

Let us assume that there exists a finite set $N$ of players and that we are given, for each coalition of players $S\subseteq N$, parameters $(Q_S, c_S, b_S, A_S, B_S)$ such that the characteristic function of a game $\nu(S)$ is given by the optimal value of the following mixed-binary quadratic program:
\begin{align*}\tag{MBQOG}
\nu(S) := \min \quad & x^\top  Q_S x +2 c_S^\top  x \\
\sta \quad & A_S x  \leq b_S, \\
& x\ge 0, \; x_j\in \{0,1\}, \; \forall j\in B_S. 
\end{align*}
In such a way, we define the TU game $(N,\nu)$ as a \emph{mixed-binary quadratic optimization game.}

\begin{lemma}\label{th:burer}
    Assume that the linear slice $L(S):=\{x\in \R^{n_S}_+ : A_S x = b_S\}$ satisfies $L(S)\subset \{x\in \R^{n_S}: 0\leq x_j\leq 1, \forall j\in B_S\}$ for all $S\subseteq N$. Then, the game $(N,\nu)$ is a TU conic game. Furthermore, its characteristic function can be represented by the following completely positive program {\rm (CPP):}
    \begin{align}\label{eq:cpp}\tag{MBQOG-CPP}
    \nu(S) = \min \quad & \ip{Q_S}{X} + 2 c_S^\top  x \nonumber\\
    \sta \quad & A_S x  \leq b_S, \nonumber\\
    & \A_S X \leq b_S \circ b_S, \nonumber\\
    & X_{jj} = x_j, \; \forall j\in B_S,\nonumber\\
    & \begin{bmatrix}
        1 & x^\top \\
        x & X
    \end{bmatrix}\in \C^*_{n_S+1},\nonumber
    \end{align}
where $\A_S:=\left((a_S)_i(a_S)_i^\top\right)_{i\in m_S}$ and $\circ$ stands for the Hadamard product.
\end{lemma}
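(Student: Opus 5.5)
The plan is to reduce the statement to Burer's completely positive reformulation of mixed-binary quadratic programs (S. Burer, 2009). The first step puts (MBQOG) into Burer's standard form. Introduce slack variables $s\geq 0$ so that $A_Sx\leq b_S$ becomes $A_Sx+s=b_S$, and work with the extended variable $\tilde x=(x,s)\ge 0$. Burer's theorem then says the following. Take $\min\{\tilde x^\top \tilde Q\tilde x+2\tilde c^\top\tilde x : \tilde A\tilde x=b,\ \tilde x\ge 0,\ \tilde x_j\in\{0,1\}\ (j\in B)\}$ with $\{\tilde x\ge 0:\tilde A\tilde x=b\}\subseteq\{\tilde x_j\le 1,\ j\in B\}$. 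Its optimal value equals that of the CPP obtained by replacing $\tilde x\tilde x^\top$ with a matrix variable $\tilde X$ and imposing
\[
\tilde A\tilde x=b,\qquad (\tilde a_i^\top \tilde X\tilde a_i)_i=b\circ b,\qquad \tilde X_{jj}=\tilde x_j\ (j\in B),\qquad \begin{bmatrix}1&\tilde x^\top\\ \tilde x&\tilde X\end{bmatrix}\in\C^*.
\]
The hypothesis on $L(S)$ in the statement is exactly Burer's key assumption, applied coalition by coalition. So for each $S$ we obtain a CPP whose optimal value is $\nu(S)$.

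The second step verifies the two inequalities behind Burer's theorem, so the argument does not rest entirely on citation. The \emph{relaxation} direction is easy. Given a feasible $x$, set $X=xx^\top$. Then $\begin{bmatrix}1\\x\end{bmatrix}\begin{bmatrix}1\\x\end{bmatrix}^\top$ is completely positive because $x\ge 0$, and $X_{jj}=x_j^2=x_j$ on binaries. The quadratic constraints $a_i^\top X a_i=(a_i^\top x)^2=b_i^2$ hold, and the objective is preserved, so the CPP value is at most $\nu(S)$. The converse is the main obstacle. Write the lifted matrix as $\sum_k \lambda_k\begin{bmatrix}\zeta_k\\ z_k\end{bmatrix}\begin{bmatrix}\zeta_k\\ z_k\end{bmatrix}^\top$ with nonnegative vectors, and split into terms with $\zeta_k>0$ and $\zeta_k=0$. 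The constraints $A x=b$ and $a_i^\top Xa_i=b_i^2$ combine, via an equality case of Cauchy--Schwarz, to show the following: the normalized points $z_k/\zeta_k$ lie in $L(S)$, and the $\zeta_k=0$ directions lie in the recession cone $\{d\ge0:Ad=0\}$. The boundedness hypothesis $L(S)\subseteq\{x_j\le 1\}$ forces recession directions to vanish on $B_S$. Using $X_{jj}=x_j$ together with $0\le (z_k/\zeta_k)_j\le1$, one then gets that each $z_k/\zeta_k$ is binary on $B_S$. The objective is linear in the lifted matrix, so it is a convex combination of feasible values plus recession terms. This yields a CPP value of at least $\nu(S)$, possibly after a limiting argument when recession directions contribute.

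The last step rewrites the constraints in the paper's notation. With $\A_S X:=(a_i^\top X a_i)_{i}=(\ip{a_ia_i^\top}{X})_i$, the problem is linear in $(x,X)$ over $\C^*_{n_S+1}$. When slacks are kept implicit, the equalities are recorded as $A_Sx\le b_S$ and $\A_SX\le b_S\circ b_S$. This produces \eqref{eq:cpp}, so $\nu(S)$ is the optimal value of a conic program with a proper cone, and $(N,\nu)$ is a TU conic game by Definition~\ref{def:ccg}. I expect the only delicate point to be keeping the inequality/slack bookkeeping consistent with Burer's equality-form hypothesis, and I would state the slack-variable version explicitly.
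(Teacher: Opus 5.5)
Your route is the paper's: its proof is just the citation of Burer's Theorem~2.6, and your sketch of the two directions is a fair outline of his argument. No limiting argument is needed in the converse: if $\nu(S)$ is finite, then $d^\top Q_Sd\ge 0$ for every $d\ge 0$ with $A_Sd=0$ and $d_{B_S}=0$, so recession terms can only raise the objective.

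\textbf{The gap.} It is the slack bookkeeping you flagged, and it cannot be closed the way you propose. Burer's theorem is stated for equality constraints. With slacks $A_Sx+s=b_S$, two things go wrong.
\begin{enumerate}[label=(\alph*)]
\item His key assumption becomes $\{x\ge 0: A_Sx\le b_S\}\subseteq\{0\le x_j\le 1,\ j\in B_S\}$. This is stronger than the stated hypothesis on $L(S)$. For instance, with $B_S=\{1\}$, $A_S=(-1)$, $b_S=1$, one has $L(S)=\emptyset$ while $\{x\ge0:A_Sx\le b_S\}=\R_+$.
\item The resulting completely positive program lives in dimension $n_S+m_S+1$. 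Its quadratic constraints read $a_i^\top Xa_i+2a_i^\top Ze_i+W_{ii}=b_i^2$, where $Z$ and $W$ are the $x$--$s$ and $s$--$s$ blocks of the lifted matrix. Since $a_i$ may have negative entries, $2a_i^\top Ze_i+W_{ii}$ has no sign. Projecting out the slacks therefore does not yield $a_i^\top Xa_i\le b_i^2$, so your last step is false.
\end{enumerate}
Your relaxation step shows the same failure. With $X=xx^\top$ one gets $a_i^\top Xa_i=(a_i^\top x)^2$, which exceeds $b_i^2$ whenever $a_i^\top x<-|b_i|$. The Cauchy--Schwarz step in your converse likewise needs $a_i^\top x=b_i$ and $a_i^\top Xa_i=b_i^2$ to hold with equality.

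\textbf{A counterexample to the inequality form.} Under the stated hypothesis, the program as displayed with ``$\le$'' is not equivalent to (MBQOG). Take $n_S=1$, $B_S=\emptyset$, $A_S=(-1)$, $b_S=1$, $Q_S=1$, $c_S=-2$. Then $\nu(S)=\min_{x\ge 0}(x^2-4x)=-4$. In (MBQOG-CPP), however, the constraint $\A_SX\le b_S\circ b_S$ reads $X\le 1$. Since completely positive matrices are positive semidefinite, $X\ge x^2$, so $x\le 1$ and the optimal value is $-3$, attained at $x=X=1$.

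\textbf{The fix.} The lemma is correct only when read with equalities: $A_Sx=b_S$ in (MBQOG), and $A_Sx=b_S$, $\A_SX=b_S\circ b_S$ in the completely positive program. That is the reading under which the hypothesis on $L(S)$, which is written with an equality, is exactly Burer's key assumption, and it is what the paper's one-line proof implicitly uses. So drop the slacks and state the equality version. Your first step, a direct application of Burer's theorem as in the paper, then finishes the proof, and the TU conic game conclusion follows from Definition~\ref{def:ccg} as you say.
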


\begin{proof}
    The result is a direct consequence of \cite[Theorem 2.6]{Burer09}.
\end{proof}

The dual program of \eqref{eq:cpp} is the following copositive program (COP):
\begin{align}\label{eq:cop}\tag{MBQOG-COP}
\maximize \quad
&
-\alpha
-\ip{u}{b_S}
-\ip{v}{b_S\circ b_S}
\\[1ex]
\st\quad
& 2y = c_S+ A_S^\top u - \displaystyle\sum_{j\in B_S} w_j e_j, \nonumber\\
& Y = Q_S+\mathcal A_S^*(v)
+\displaystyle\sum_{j\in B_S} w_j E_{jj}, \nonumber\\
& \begin{bmatrix}
\alpha &
y^\top
\\[1ex]
y
&
Y
\end{bmatrix}
\in \mathcal C_{n_S+1}, \nonumber
\\[1ex]
&
u,v \in \R^{m_S}_+,
\quad
w\in\mathbb R^{|B_S|},\quad \alpha\in \R,\nonumber
\end{align}
where $e_j$ stands for the $j$th vector of the canonical basis, and $E_{jj}=\diag(e_j)$ for all $j\in B_S$.

The reader may observe that, as pointed out in \cite{Burer09}, the assumption in Theorem~\ref{th:burer} is not restrictive. Indeed, if $B_S \neq \emptyset$ and the condition is not already satisfied, it can be enforced without loss of generality by introducing, for each $j \in B_S$, the additional constraint $x_j+s_j=1$, where $s_j \geq 0$ is a slack variable.

Motivated by their practical relevance in combinatorial optimization games, we focus on the particularly important class of mixed-binary quadratic optimization games defined by $(Q,c,b(S),A,B)$ for each coalition $S$, where $b(S):=\sum_{i\in S} b(i)$ is obtained by pooling the right-hand-sides (resources) $b(i)$ contributed by the players in the coalition. 
Despite the generality of this class of games, we subsequently establish sufficient conditions guaranteeing balancedness.

\begin{theorem}
Let $(N,\nu)$ be the mixed-binary quadratic optimization game defined by $(Q,c,b(S),A,B)$ for each coalition $S$. If the conic program \eqref{eq:cpp} for the grand coalition $N$ verifies the Slater condition and has an optimal dual price vector \linebreak $(\alpha^\star, u^\star, v^\star, w^\star, y^\star, Y^\star)$. Then, the following claims hold.
\begin{enumerate}[label=\roman*)]
    \item If $y^\star\in \R_+^n$, then the core of $(N,\nu)$ is nonempty.

    \item If $\frac{1}{|N|}\geq \displaystyle\sup_{\substack{\eta\geq 0 \\ \eta^\top y^\star<0}}\frac{(\eta^\top y^\star)^2}{\alpha^\star \eta^\top Y^\star \eta}$, then the core of $(N,\nu)$ is nonempty.
\end{enumerate}

\end{theorem}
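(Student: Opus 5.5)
The plan is to mimic the proof of Theorem~\ref{tg:production}, with two adjustments. First, this is a cost game, so a core allocation must satisfy $\sum_{i\in S} z_i \le \nu(S)$. Second, the dual objective of \eqref{eq:cop} has two terms that are not additive in $S$: the constant $-\alpha$ and the quadratic term $-\ip{v}{b(S)\circ b(S)}$.

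By the Slater hypothesis and strong duality, $\nu(N) = -\alpha^\star - \ip{u^\star}{b(N)} - \ip{v^\star}{b(N)\circ b(N)}$. I propose the allocation
\begin{equation*}
z_i := -\frac{\alpha^\star}{|N|} - \ip{u^\star}{b(i)} - \ip{v^\star}{b(i)\circ b(N)}, \qquad i\in N.
\end{equation*}
Summing over $i \in N$ gives exactly the dual value, so efficiency is immediate.

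For a coalition $S$, I would exhibit a feasible point of the dual program \eqref{eq:cop} for $S$, namely $(\alpha_S, u^\star, v^\star, w^\star, y^\star, Y^\star)$ with $\alpha_S := \frac{|S|}{|N|}\alpha^\star$. The linear constraints defining $y$ and $Y$ do not involve $b(S)$, since $Q$, $c$, $A$ and $B$ are common to all coalitions, so they remain satisfied. Weak duality for the minimization problem \eqref{eq:cpp} then gives
\begin{equation*}
\nu(S) \ge -\frac{|S|}{|N|}\alpha^\star - \ip{u^\star}{b(S)} - \ip{v^\star}{b(S)\circ b(S)}.
\end{equation*}
Comparing with $\sum_{i\in S} z_i = -\frac{|S|}{|N|}\alpha^\star - \ip{u^\star}{b(S)} - \ip{v^\star}{b(S)\circ b(N)}$, it remains to show $\ip{v^\star}{b(S)\circ b(N)} \ge \ip{v^\star}{b(S)\circ b(S)}$. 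This holds because $v^\star \ge 0$ and $b(N) \ge b(S) \ge 0$ componentwise. Here I will need resource endowments $b(i) \ge 0$, which is the natural setting of pooled right-hand sides; I would state this explicitly as a standing assumption.

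The main obstacle is verifying that the rescaled block matrix $M_S := \left[\begin{smallmatrix} \alpha_S & (y^\star)^\top \\ y^\star & Y^\star \end{smallmatrix}\right]$ is copositive. By definition this means that for all $(t,\eta) \ge 0$,
\begin{equation*}
\alpha_S t^2 + 2t\,\eta^\top y^\star + \eta^\top Y^\star \eta \ge 0.
\end{equation*}
Setting $t = 0$ in the copositivity of the original matrix shows $Y^\star \in \mathcal C_n$. Setting $\eta = 0$ shows $\alpha^\star \ge 0$.

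If $\alpha^\star = 0$, then $M_S$ equals the original matrix and there is nothing to prove. Otherwise $\alpha_S > 0$, and the case analysis is as follows.

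\begin{enumerate}[label=\roman*)]
\item If $y^\star \ge 0$, every term of the inequality is nonnegative for $(t,\eta) \ge 0$, so $M_S$ is copositive for any $\alpha_S \ge 0$.

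\item In general, directions with $\eta^\top y^\star \ge 0$ are handled as in case i). For $\eta \ge 0$ with $\eta^\top y^\star < 0$, minimizing the quadratic in $t \ge 0$ shows that copositivity is equivalent to $\alpha_S\, \eta^\top Y^\star \eta \ge (\eta^\top y^\star)^2$. Copositivity of the original matrix gives $\alpha^\star\, \eta^\top Y^\star \eta \ge (\eta^\top y^\star)^2 > 0$, so the ratio $\frac{(\eta^\top y^\star)^2}{\alpha^\star\, \eta^\top Y^\star \eta}$ is well defined. The hypothesis bounds this ratio by $\frac{1}{|N|} \le \frac{|S|}{|N|} = \frac{\alpha_S}{\alpha^\star}$, which is exactly the required inequality.
\end{enumerate}

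Hence the rescaled point is dual feasible for every $S$, and $z$ belongs to the core in both cases.
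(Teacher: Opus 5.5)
Your proposal is correct and follows essentially the same route as the paper. It uses the same allocation (the paper writes $\sum_{j\in N}\ip{v^\star}{b_i\circ b_j}$, which equals your $\ip{v^\star}{b(i)\circ b(N)}$), the same rescaled dual point with $\alpha$ replaced by $\frac{|S|}{|N|}\alpha^\star$, and the same copositivity check by minimizing the quadratic in the first coordinate. Your standing assumption $b(i)\ge 0$ (needed so that $\ip{v^\star}{b(S)\circ b(N\setminus S)}\ge 0$), your separate treatment of $\alpha^\star=0$, and your remark that $\eta^\top Y^\star\eta>0$ whenever $\eta^\top y^\star<0$ are points the paper's proof uses only implicitly or skips.
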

\begin{proof}
Let us define the allocation $z_i=-\frac{\alpha^\star_N}{|N|}-\ip{u_N^\star}{b(i)} - \sum_{j\in N}\ip{v^\star_N}{b_i\circ b_j}$ for all $i\in N$. Note that $$\sum_{i\in N} z_i = \sum_{i\in N} \left[-\frac{\alpha^\star_N}{|N|}-\ip{u_N^\star}{b(i)} - \sum_{j\in N}\ip{v^\star_N}{b_i\circ b_j}\right] = \nu(N)$$ so the efficiency is already provided.

To prove coalitional rationality in case i), first notice that under hypothesis, we have that the polynomial function 
\begin{equation}\label{eq:nonnegativity}
    f(\alpha, \xi,\eta)=[\xi\; \eta]\begin{bmatrix}
    \alpha & y^{\star \top}\\
    y^\star & Y^\star
\end{bmatrix}\begin{bmatrix}
    \xi \\ \eta
\end{bmatrix} = \alpha\xi^2 + 2\xi\eta^\top y^\star + \eta^\top Y^\star\eta \geq 0
\end{equation} for $\alpha,\xi,\eta\geq 0$ due to $Y^\star\in \C_n$ and $y^\star\in \R^n_+$.  Besides, for any subset $S\subset N$ the solutions $u_N^\star, v^\star_N$, and $\frac{|S|}{|N|}\alpha^\star_N$ are feasible for its dual program \eqref{eq:cop} by \eqref{eq:nonnegativity}. Hence, \begin{multline*}
    \sum_{i\in S} z_i = \sum_{i\in S} \left[-\frac{\alpha^\star_N}{|N|}-\ip{u_N^\star}{b(i)} - \sum_{j\in N}\ip{v^\star_N}{b_i\circ b_j}\right] \\= -|S|\frac{\alpha^\star_N}{|N|}-\ip{u_N^\star}{b(S)} - \sum_{i\in S}\sum_{j\in N}\ip{v^\star_N}{b_i\circ b_j} \\ \leq -|S|\frac{\alpha^\star_N}{|N|} -\ip{u_N^\star}{b_S} - \sum_{i\in S}\sum_{j\in S}\ip{v^\star_N}{b_i\circ b_j} \\= -|S|\frac{\alpha^\star_N}{|N|} - \ip{u^\star_N}{b_S} - \ip{v^\star_N}{b_S\circ b_S} \leq  \nu(S)
\end{multline*} 
where the last inequality follows from weak duality. Thus, $z=(z_i)_{i\in N}$ belongs to the core of $(N,\nu)$ proving the claim.

To prove ii), consider the change of variable $\alpha\mapsto \lambda\alpha^\star$ with $\lambda\geq 0$ in $f$. Calculating the partial derivative on $\xi$ an equalizing to $0$,
\begin{equation*}
    \frac{\partial f}{\partial \xi} = 2\lambda\alpha^\star\xi + 2\eta^\top y^\star = 0
\end{equation*}
provides a minimum (by convexity) in $\xi^*=-\frac{\eta^\top y^\star}{\lambda\alpha^\star}$. Then we have that $$f(\lambda\alpha^\star, \xi, \eta)\geq f(\lambda\alpha^\star, \xi^*, \eta)=\eta^\top Y^\star\eta - \frac{(\eta^\top y^\star)^2}{\lambda \alpha^\star}.$$

By hypothesis, $\frac{|S|}{|N|}\geq \frac{1}{|N|}\geq \frac{(\eta^\top y^\star)^2}{\alpha^\star \eta^\top Y^\star \eta}$ what implies $\eta^\top Y^\star\eta - \frac{(\eta^\top y^\star)^2}{\frac{|S|}{|N|} \alpha^\star}\geq 0$ for all $\eta\geq 0$ such that $\eta^\top y^\star< 0$ and each coalition $S$. Thus, using also the case i), $f\left(\frac{|S|}{|N|}\alpha^\star, \xi, \eta\right)\geq 0$ for all $\xi,\eta\geq 0$. The remainder of the proof follows equal to the case i).
\end{proof}

The significance of this theorem lies in replacing a combinatorial verification of balancedness with a dual-based analytical criterion. Rather than establishing core nonemptiness by evaluating an exponential family of coalition constraints, we derive a sufficient condition that reduces the task to verifying explicit requirements on the optimal dual prices associated with the conic reformulation.

We conclude this section by noting that the combinatorial optimization games most commonly examined in the literature \citep{Deng99,granot1981minimum,Tamir88,GoSku04} constitute special cases of this broader class, obtained by simply omitting the quadratic term from the objective function of the underlying characteristic function optimization problem.

\section{Conclusion\label{s:cr}}

This paper introduces the concept of cooperative conic games, providing a unified framework that connects cooperative game theory and conic programming. The proposed perspective encompasses a broad family of optimization-based cooperative games, including linear, second-order cone, semidefinite, and other convex nonlinear optimization problems admitting conic representations. This level of generality makes it possible to capture, within a single mathematical framework, a variety of cooperative settings that have either been studied independently or have not previously been considered from a cooperative game-theoretic approach.

The main methodological contribution of the paper is the exploitation of the structural properties of conic programs and conic duality to derive balancedness results and payoff allocation (or cost sharing) rules. In particular, the proposed framework provides sufficient conditions for core nonemptiness that can be verified through the optimal dual solution of the underlying conic program, replacing combinatorial verification procedures with tractable optimization-based criteria. Moreover, the dual variables naturally induce sharing rules that are both economically interpretable and computationally tractable.

To illustrate the versatility of the proposed framework, we have studied several representative classes of cooperative optimization games. We have shown how cooperative conic games model cooperation in multi-agent optimal control, encompassing both controller activation and communication network schemes. We have also generalized linear production games to nonlinear production settings, including Cobb--Douglas models. We explored cooperative behavior in portfolio selection under the Markowitz, conditional value-at-risk, and entropic risk measures. Finally, we unified a broad class of combinatorial optimization games through exact conic reformulations for binary quadratic optimization. Taken together, these examples demonstrate that the proposed framework accommodates the most relevant cones in modern convex optimization, highlighting both its modeling versatility and its applicability across a wide range of real-world problems arising in engineering, economics, management science, and operations research.

\subsection*{Future research} Several future research avenues naturally arise from this work. On the theoretical side, it would be interesting to investigate additional structural conditions ensuring stronger cooperative properties, such as convexity, and other solution concepts for broader classes of conic games. More generally, we believe that the proposed framework opens the way to the study of cooperation in many nonlinear optimization models that have not yet been analyzed from a cooperative game-theoretic perspective. For instance, our introduction of cooperative polynomial optimization games illustrates how the framework can be extended beyond the existing literature and opens up a promising research direction that deserves further investigation. 

The authors of this paper hope that these developments will continue to strengthen the interaction between mathematical programming and game theory.

\subsection*{Acknowledgements}

The authors would like to express their sincere gratitude to the Agencia Estatal de Investigaci\'on of the Ministerio de Ciencia, Innovaci\'on y Universidades for supporting this research through the grants PID2020-114594GB-C21, PID2024-156594NB-C21, Plan Estatal 2024-2027 - Retos - Programaci\'on Conjunta Internacional PCI2024-155024-2, Programa de Movilidad PRX24/00256, and IMUS--Mar\'ia de Maeztu CEX2024-001517-M. They also gratefully acknowledge the support of the Programa de Andaluc\'ia FEDER through project C‐EXP‐139‐UGR23.

\section*{Declarations}

\subsection*{Funding} This research was supported by grants PID2020-114594GB-C21, PID2024-156594NB-C21, PCI2024-155024-2, PRX24/00256, and IMUS--Mar\'ia de Maeztu CEX2024-001517-M funded by MCIN/AEI/10.13039/501100011033; and the project C‐EXP‐139‐UGR23 funded by Programa de Andaluc\'ia FEDER.

\subsection*{Conflict of interest/Competing interests} The authors declare that they have no known competing financial interests or personal relationships that could have appeared to influence the work reported in this paper.

\bibliographystyle{abbrvnat}
\bibliography{references}

\end{document}